\numberwithin{equation}{section}
\newtheorem{theorem}{Theorem}[section]
\newtheorem{lemma}[theorem]{Lemma}
\newtheorem{proposition}[theorem]{Proposition}
\newtheorem{definition}[theorem]{Definition}
\newtheorem{corollary}[theorem]{Corollary}
\newtheorem{remark}[theorem]{Remark}
\newcommand{\vanish}[1]{}
\newcommand{\asc}{\xrightarrow{a}}
\newcommand{\desc}{\xrightarrow{d}}
\newcommand{\lawalk}{\leq_{la}}
\newcommand{\rawalk}{\leq_{ra}}
\newcommand{\srawalk}{<_{ra}}
\begin{document}

\title[Alt-acyclic tournaments]{Alternation acyclic tournaments}

\author[G\'abor Hetyei]{G\'abor Hetyei}

\address{Department of Mathematics and Statistics,
  UNC-Charlotte, Charlotte NC 28223-0001.
WWW: \tt http://www.math.uncc.edu/\~{}ghetyei/.}

\subjclass [2010]{Primary 52C35; Secondary 05A10, 05A15, 11B68, 11B83}

\keywords{Genocchi numbers, semi-acyclic tournament, Linial arrangement,
Dellac configurations}

\date{\today}

\begin{abstract}
We define a tournament to be alternation acyclic if it does not contain
a cycle in which descents and ascents alternate. Using a result by
Athanasiadis on hyperplane arrangements, we show that these tournaments
are counted by the median Genocchi numbers. By establishing a bijection
with objects defined by Dumont, we show that alternation acyclic
tournaments in which at least one ascent begins at each vertex, except
for the largest one, are counted by the Genocchi numbers of the first
kind. Unexpected consequences of our results include a pair of ordinary
generating function formulas for the Genocchi numbers of both kinds and
a simple model for the normalized median Genocchi numbers.   
\end{abstract}
\maketitle

\section*{Introduction}

Genocchi numbers of the first kind are closely related to the Bernoulli
and Euler (tangent and secant) numbers, and the first classes of
permutations counted by them, introduced by Dumont~\cite{Dumont} are
{\em alternating} in one way or another, just like the alternating
permutations, counted by the tangent and secant numbers. Whereas the
tangent and secant numbers found a geometric interpretation through the work of
Purtill~\cite{Purtill}, Stanley~\cite{Stanley-flag} and many people in
their wake (using Andr\'e permutations, first studied by Foata,
Sch\"utzenberger  and Strehl in the 1970-ties~\cite{Foata}), there seems
to be far less done in terms of finding geometric interpretations for
the various types of Genocchi numbers, studied concurrently
with the Genocchi numbers of the first kind. A notable exception is the work
of Feigin~\cite{Feigin-deg}, identifying the Poincar\'e polynomials of
the complete degenerate flag-varieties as $q$-generalizations of the
normalized median Genocchi numbers introduced in \cite{Han-Zeng}.

This paper proposes a geometric interpretation of the Genocchi numbers,
in the world of hyperplane arrangements. We arrive at this
interpretation by generalizing the definition of semiacyclic tournaments,
used by Postnikov and Stanley~\cite{Postnikov-Stanley}, and
independently, Shmulik Ravid, to bijectively label the regions created
by the Linial arrangement. The subject of this paper is this wider class of
tournaments (we call them {\em alternation acyclic}), which may be used to
bijectively label the regions in a homogeneous variant of the Linial
arrangement, which we call the {\em homogenized Linial
  arrangement}. The Linial arrangement studied in the literature is a 
section of our homogenized Linial arrangement. Using the technique of
counting points in vector spaces over finite fields, developed by
Athanasiadis~\cite{Athanasiadis-charpol}, we are able to prove that the
number of regions created by our homogenized Linial arrangement, and
thus the number of alternation acyclic tournaments, is a median Genocchi
number (Theorem~\ref{thm:all-alta}). No direct combinatorial argument
was found for this result. On 
the other hand, using this result it is 
possible to find a simple class of objects counted by the median
Genocchi numbers, which allow a simple ${\mathbb Z}_2^n$-action,
making the known fact transparent, that the median Genocchi number $H_{2n+1}$
is an integer multiple of $2^n$. The set of ${\mathbb Z}_2^n$-orbits
also has a simple combinatorial representation
(Theorem~\ref{thm:nmGenocchi}). While this work was 
still a preprint, A.\ Bigeni has found a highly nontrivial
bijection~\cite{Bigeni-bij} between this model and the model of
Feigin~\cite{Feigin-deg}.

We also obtain an explicit combinatorial argument showing that {\em
ascending} alternation acyclic tournaments (in which each numbered
element defeats at least one element with a larger number, except for
the largest numbered element), are counted by the Genocchi numbers of
the first kind (Corollary~\ref{cor:asc}). The extension of this direct
counting approach to all alternation acyclic tournaments yields
recurrences leading to formulas for the ordinary generating functions
for the Genocchi numbers of the first and second kinds.     
 
Our paper is structured as follows. After collecting basic facts about
Genocchi numbers, hyperplane arrangements in general and the Linial
arrangement in particular, we introduce alternation acyclic tournaments
in Section~\ref{sec:alta} and prove their most important properties. In
particular, we show that they induce a partial order which we call the
right alternating walk order. In Section~\ref{sec:forest} we show how to
encode each alternation acyclic tournament with a pair $(\pi,p)$, where
the permutation $\pi$ is a linear extension of the
alternating walk order, and the parent function $p$ assigns to each
element a larger element or the infinity symbol as its parent, thus
defining a partial order that is a forest. Even though this
representation is not unique, using it allows us to introduce a
homogenized generalization of the Linial arrangement
in Section~\ref{sec:alth} and show that the regions of this hyperplane
arrangement are in bijection with our alternation acyclic
tournaments. Section~\ref{sec:alth} also contains the proof of
Theorem~\ref{thm:all-alta}, stating 
that the number of all alternation acyclic tournaments is a median
Genocchi number. We take a closer look at the codes $(\pi,p)$ in
Section~\ref{sec:lmax} and find a way to select unique codes (which we
call largest maximal representations) for each alternation acyclic
tournament. We also obtain a characterization of all valid codes.
In Section~\ref{sec:refined} we use this characterization to obtain
Theorem~\ref{thm:Dumontfine}, a combinatorial result
refining our counting of alternation acyclic tournaments. The key ingredient to obtain
this result is a descent-sensitive coding of permutations, using
excedant functions, a variant of an idea already present in
Dumont's work~\cite{Dumont}. This result allows counting ascending
alternation acyclic tournaments with the help of Dumont's theorem, and
introduce new combinatorial models for the median and normalized median Genocchi
numbers in Section~\ref{sec:genmod}. The generating function formulas
are derived in Section~\ref{sec:genfun}. This paper raises as many
questions as it answers: some of these are mentioned in the concluding
Section~\ref{sec:concl}.

\section{Preliminaries}

\subsection{Genocchi numbers}

The {\em Genocchi numbers $G_n$ of the first kind} are given by the
exponential generating function
$$
\sum_{n=1}^{\infty} G_n \frac{t^n}{n!}=\frac{2t}{e^t+1}.
$$
The only nonzero $G_n$ with $n$ odd is $G_{1}=1$. 
For even $n$, the first few values of $G_n$ are $G_{2}=-1$, $G_{4}=1$,
$G_{6}=-3$, $G_{8}=17$, $G_{10}=-155$ and $G_{12}=2073$.
Their study goes back at least to Seidel~\cite{Seidel}, who published a
triangular table, called {\em Seidel's triangle},  allowing to compute
them recursively. Generalizations and variants of Seidel's triangle
include~\cite{Ehrenborg,Zeng}.
The first combinatorial models for them were given by
Dumont~\cite{Dumont}. We will use the following result from his
work~\cite[Corollaire du Th\'eor\`eme 3]{Dumont}, which characterizes
the signless Genocchi numbers as numbers of {\em excedant} functions. A
function, defined on a set of integers, is {\em excedant} if it satisfies
$f(i)\geq i$ for all $i$. Note that excedant functions are also called
{\em surjective pistols} by Dumont and Viennot in~\cite{Dumont-Viennot}.
\begin{theorem}[Dumont]
  \label{thm:dumont}
The unsigned Genocchi number $|G_{2n+2}|$ is the number of excedant 
functions $f:\{1,\ldots,2n\}\rightarrow \{1,\ldots,2n\}$ satisfying
$f(\{1,\ldots,2n\})=\{2,4,\ldots,2n\}$.   
\end{theorem}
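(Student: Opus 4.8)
The plan is to identify the functions in the statement---the surjective excedant maps onto the even values, i.e.\ the \emph{surjective pistols} of height $n$---with a specialization of a one-parameter family, derive a difference recurrence for that family, recognize it as the Gandhi recurrence, and invoke the classical identification of Gandhi polynomials with Genocchi numbers. For integers $n,k\ge 0$ I would set
\[
N_n(k)\df \#\bigl\{\,f:\{1,\dots,2n\}\to\{2,4,\dots,2n+2k\}\ :\ f(i)\ge i\ \text{for all }i,\ \{2,4,\dots,2n\}\subseteq f(\{1,\dots,2n\})\,\bigr\},
\]
with the convention $N_0(k)=1$; thus the codomain is enlarged by $k$ optional ``high'' even values while surjectivity is still required only onto $\{2,4,\dots,2n\}$. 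A routine inclusion--exclusion over which of the values $2,4,\dots,2n$ are omitted shows that $N_n(k)$ is a polynomial in $k$, so it is meaningful to write $N_n(x)$.

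First I would record the reduction to this family. In a function counted by the theorem one has $f(2n)=2n$ and $f(2n-1)=2n$ forced, since these arguments must receive an even value that is at least $2n-1$. Deleting $2n-1$ and $2n$ leaves a map $g$ on $\{1,\dots,2n-2\}$ into $\{2,4,\dots,2n-2\}\cup\{2n\}$ that is excedant and surjective onto $\{2,4,\dots,2n-2\}$ (the value $2n$ being already covered, hence optional for $g$), and conversely any such $g$ extends uniquely. Reading $2n=2(n-1)+2$ as the single extra high value, this is exactly a bijection onto the objects counted by $N_{n-1}(1)$, so the theorem's count equals $N_{n-1}(1)$.

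Next I would derive the recurrence $N_n(k)=(k+1)^2\,N_{n-1}(k+1)-k^2\,N_{n-1}(k)$. The arguments $2n-1$ and $2n$ can only be sent to $2n$ or to one of the $k$ high values, giving $k+1$ choices each, while the restriction $g$ to $\{1,\dots,2n-2\}$ ranges over excedant maps into $\{2,\dots,2n-2\}$ together with the $k+1$ optional high values $2n,y_1,\dots,y_k$, subject to covering $\{2,\dots,2n-2\}$. The one remaining global constraint is that $2n$ actually be attained; removing it by inclusion--exclusion, the unconstrained count is $(k+1)^2N_{n-1}(k+1)$ and the count in which $2n$ is avoided by every argument is $k^2N_{n-1}(k)$, which yields the stated identity. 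Putting $B_n(x)\df x^2N_{n-1}(x)$ then gives $B_1(x)=x^2$ and, after multiplying the recurrence by $x^2$ and recognizing $(x+1)^2N_{n-1}(x+1)=B_n(x+1)$ and $x^2N_{n-1}(x)=B_n(x)$, the Gandhi recurrence $B_{n+1}(x)=x^2\bigl(B_n(x+1)-B_n(x)\bigr)$.

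Finally, invoking the classical identity $B_n(1)=|G_{2n+2}|$ relating the Gandhi polynomials to the Genocchi numbers (Gandhi's conjecture, proved by Carlitz and by Riordan--Stephens) gives $N_{n-1}(1)=B_n(1)=|G_{2n+2}|$, which is the theorem. I expect the combinatorial part---the reduction and the difference recurrence---to be elementary once one hits on the right generalization, namely adjoining $k$ optional high targets so that surjectivity onto $\{2,\dots,2n\}$ manifests as a discrete difference. The main obstacle is the passage from the recurrence back to the Genocchi numbers: proving that the polynomials generated by $B_1(x)=x^2$ and $B_{n+1}(x)=x^2\bigl(B_n(x+1)-B_n(x)\bigr)$ satisfy $B_n(1)=|G_{2n+2}|$. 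This is where the defining generating function $2t/(e^t+1)$ must enter, most transparently through Seidel's triangle, by checking that the integers $B_n(1)$ obey the same boustrophedon recurrence that produces the $G_{2n+2}$ along the boundary of that triangle.
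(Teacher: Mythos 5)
Your proposal cannot be matched against a proof in the paper, because the paper does not prove this statement at all: Theorem~\ref{thm:dumont} is quoted as a known result of Dumont (his Corollaire du Th\'eor\`eme 3) and used as a black box. Judged on its own merits, your argument is correct, and it is in essence a reconstruction of Dumont's original route. The reduction is right: $f(2n)=2n$ and $f(2n-1)=2n$ are forced, so the count in the theorem equals $N_{n-1}(1)$. The complementary-counting step giving $N_n(k)=(k+1)^2N_{n-1}(k+1)-k^2N_{n-1}(k)$ is sound; the only unstated detail is that in the ``$2n$ avoided'' term one must relabel the high values $\{2n+2,\ldots,2n+2k\}$ as $\{2n,\ldots,2n+2k-2\}$ to recognize $N_{n-1}(k)$, which is harmless because excedance of the arguments $1,\ldots,2n-2$ is insensitive to this shift. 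The substitution $B_n(x)=x^2N_{n-1}(x)$ then does yield Gandhi's recurrence $B_{n+1}(x)=x^2\bigl(B_n(x+1)-B_n(x)\bigr)$ with $B_1(x)=x^2$; note that you do not even need the polynomiality of $N_n$ here, since induction on $n$ shows that the Gandhi polynomials agree with $k^2N_{n-1}(k)$ at every nonnegative integer $k$, which is all you use at $k=1$. The one external ingredient, $B_n(1)=|G_{2n+2}|$ (Carlitz, Riordan--Stephens), is a legitimate citation, though it means the proof is not self-contained down to the defining generating function $2t/(e^t+1)$; this is exactly the dependence Dumont's own proof has, as he introduced these surjective-pistol objects precisely because they satisfy the Gandhi recurrence. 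So: correct, and essentially the historical proof of the cited theorem, which the paper itself omits.
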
  
The following wording is easily seen to be equivalent.
\begin{corollary}
  \label{cor:dumont}
The unsigned Genocchi number $|G_{2n+2}|$ is the number of ordered pairs 
$$((a_1,\ldots,a_{n}), (b_1,\ldots,b_{n}))\in {\mathbb Z}^{n}\times
{\mathbb Z}^{n}$$
such that $1\leq a_i, b_i\leq i$ hold for all $i$
and the set $\{a_1,b_1,\ldots,a_{n},b_{n}\}$ equals $\{1,\ldots,n\}$.
\end{corollary}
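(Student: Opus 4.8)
The plan is to realize the claimed equivalence as an explicit bijection between the excedant functions counted in Theorem~\ref{thm:dumont} and the ordered pairs described here, so that the two sets literally have the same cardinality $|G_{2n+2}|$ and no separate counting is needed.

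First I would group the domain $\{1,\ldots,2n\}$ of such a function $f$ into the $n$ consecutive blocks $\{2k-1,2k\}$ for $k=1,\ldots,n$. The two governing conditions on $f$ interact nicely on each block: since $f$ is excedant we have $f(2k-1)\ge 2k-1$ and $f(2k)\ge 2k$, and since every value of $f$ is even, the odd lower bound is automatically sharpened to $f(2k-1)\ge 2k$. Hence both $f(2k-1)$ and $f(2k)$ lie in $\{2k,2k+2,\ldots,2n\}$. Writing $f(2k-1)=2\alpha_k$ and $f(2k)=2\beta_k$, I obtain two sequences with $k\le \alpha_k,\beta_k\le n$, and the surjectivity condition $f(\{1,\ldots,2n\})=\{2,4,\ldots,2n\}$ translates exactly into $\{\alpha_1,\beta_1,\ldots,\alpha_n,\beta_n\}=\{1,\ldots,n\}$.

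The only remaining discrepancy is that the $\alpha_k,\beta_k$ satisfy the ``above diagonal'' bound $\alpha_k\ge k$ whereas the corollary asks for the ``below diagonal'' bound $a_i\le i$. I would remove it with the single involution $x\mapsto n+1-x$ applied simultaneously to indices and values: set $a_{n+1-k}=n+1-\alpha_k$ and $b_{n+1-k}=n+1-\beta_k$. Then $\alpha_k\ge k$ becomes $a_{n+1-k}\le n+1-k$ and $\alpha_k\le n$ becomes $a_{n+1-k}\ge 1$, so that $1\le a_i,b_i\le i$ for every $i$; and because complementation fixes $\{1,\ldots,n\}$ setwise, the covering condition $\{\alpha_k,\beta_k\}=\{1,\ldots,n\}$ passes to $\{a_i,b_i\}=\{1,\ldots,n\}$. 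Reading each of these steps backwards produces the inverse map, so the correspondence is a bijection and the counts agree.

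There is no deep obstacle here; the work is entirely in bookkeeping. The one point that deserves genuine care is the interplay in the first step: it is precisely the evenness of all values of $f$ that promotes the excedant bound at the odd position $2k-1$ from $\ge 2k-1$ to $\ge 2k$, thereby aligning both members of a block under the common bound $\ge k$ after halving. Getting this alignment right is what makes the two coordinates of a block correspond cleanly to the ordered pair $(a_i,b_i)$ with the same index bound, and it is the only place where one could slip.
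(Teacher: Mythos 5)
Your proof is correct and is essentially the paper's own argument: composing your two steps (halving the values, then complementing both indices and values via $x\mapsto n+1-x$) yields exactly the paper's displayed bijection $(a_i,b_i)=\bigl(n+1-\tfrac{f(2n+1-2i)}{2},\,n+1-\tfrac{f(2n+2-2i)}{2}\bigr)$. The only difference is that the paper states this formula without spelling out the verification, while you carefully check the bound promotion at odd positions and the invertibility.
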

Another model, ``alternating pistols'', may be found
in~\cite{Viennot}. For more information on
the Genocchi numbers of the first kind we refer the reader to its
entries (A036968 and A001469) in~\cite{OEIS}. 

The {\em median Genocchi numbers $H_{2n-1}$}, also called {\em Genocchi
  numbers of the second kind}, also appear already in Seidel's
triangle. Their study evolved concurrently with the study of the
Genocchi numbers of the first kind. For detailed bibliography on them
we refer the reader to the above mentioned sources, and their entry
(A005439) in~\cite{OEIS}. Their first few values are $H_1=1$, $H_3=2$,
$H_5=8$, $H_7=56$, $H_9=608$ and $H_{11}=9440$. 
In this paper we will use the following recent
result on them, due to Claesson, Kitaev, Ragnarsson and
Tenner~\cite{Claesson-LS}: 
\begin{equation}
\label{eq:HLS}  
H_{2n-1}=\sum_{k=1}^n (-1)^{n-k}\cdot (k!)^2\cdot PS_n^{(k)}. 
\end{equation}
Here the numbers $PS_n^{(k)}$ are the {\em Legendre-Stirling numbers},
see the work of Andrews, Gawronski and Littlejohn~\cite{Andrews-LS}. 

The median Genocchi number $H_{2n-1}$ is known to be an integer multiple
of $2^{n-1}$, see \cite{Barsky-Dumont}. The numbers $h_n=H_{2n+1}/2^n$
are the {\em normalized median Genocchi numbers}. They are listed as
sequence A000366 in~\cite{OEIS}. The first few values are $h_1=1$, $h_2=2$,
$h_3=7$, $h_4=38$, $h_5=295$, $h_6=3098$ and  $h_7=42271$. Several combinatorial
  models of these numbers exists, perhaps the most known are the {\em
    Dellac configurations}~\cite{Dellac}. Other models may be found in
  the works of Bigeni~\cite{Bigeni},
  Feigin~\cite{Feigin-deg,Feigin-med}, Han and Zeng~\cite{Han-Zeng}, and
  Kreweras and Barraud~\cite{Kreweras-Barraud}. 
We will present a new combinatorial model for the normalized median Genocchi
numbers in Theorem~\ref{thm:nmGenocchi}. Recently
Bigeni~\cite{Bigeni-bij} found a highly nontrivial bijection between our
model and Feigin's model~\cite{Feigin-deg}. 

\subsection{Hyperplane arrangements}

A hyperplane arrangement ${\mathcal A}$ is a finite collection of
codimension one hyperplanes in a $d$-dimensional vector space over 
${\mathbb R}$, which partition the space into regions. The number
$r({\mathcal A})$ of these regions may be found using
{\em Zaslavsky's formula}~\cite{Zaslavsky}, stating 
\begin{equation}
  \label{eq:Zaslavsky}
r({\mathcal A})=(-1)^d \chi({\mathcal A},-1). 
\end{equation}
Here $\chi({\mathcal A},q)$ is the {\em characteristic polynomial} of
the arrangement, which Zaslavsky expressed in terms of the M\"obius
function in the intersection lattice of the hyperplanes. Instead of 
using Zaslavsky's original formulation, we will use the following
result of Athanasiadis~\cite[Theorem 2.2]{Athanasiadis-charpol}.
In the case when the hyperplanes of ${\mathcal A}$ are defined by
equations with integer coefficients, we may consider the hyperplanes
defined by the same equations in a vector space of the same dimension
over a finite field ${\mathbb F}_q$ with $q$ elements, where $q$ is a
prime number. If $q$ is sufficiently large, then the number 
$\chi({\mathcal A},q)$ is the number of points in the vector space that
do not belong to any hyperplane in the arrangement:  
\begin{equation}
\label{eq:Christos}
\chi({\mathcal A},q)=\left|{\mathbb F}_q^d -\bigcup {\mathcal A}\right|.
\end{equation}

\subsection{Semiacyclic tournaments and the Linial arrangement}

This paper is on a class of directed graphs properly containing the
class of {\em semiacyclic tournaments}. A tournament $T$ on the set
$\{1,\ldots,n\}$ is a directed graph with no loops nor multiple edges,
such that for each pair of 
vertices $\{i,j\}$ from  $\{1,\ldots,n\}$, exactly one of the directed
edges $i\rightarrow j$ and $j\rightarrow i$ belongs to the graph. We
consider $\{1,\ldots,n\}$ with the natural order on positive integers. 
A directed edge $i\rightarrow j$ in a cycle is an {\em ascent} if $i<j$
otherwise it is a {\em descent}. An {\em ascending cycle} is a directed
cycle in which the number of descents does not exceed the number of
ascents. A tournament on $\{1,\ldots,n\}$ is {\em semiacyclic} if it
contains no ascending cycle. 

Semiacyclic tournaments arose in the study of the {\em Linial arrangement ${\mathcal
    L}_{n-1}$}. This arrangement is the set of hyperplanes  
$$
x_i-x_j=1 \quad \mbox{for $1\leq i<j\leq n$}
$$
in the subspace $V_{n-1}\subset {\mathbb R}^{n-1}$ given by
$$V_{n-1}=\{(x_1,\ldots,x_n)\in {\mathbb R}^{n}\:
: \: x_1+\cdots +x_n=0\}.$$ 
To each region $R$ in ${\mathcal L}_{n-1}$ we may associate a tournament 
on $\{1,\ldots,n\}$ as follows: for each $i<j$ we set $i\rightarrow j$
if $x_i>x_j+1$ and we set $j\rightarrow i$ if $x_i<x_j+1$. 
Postnikov and Stanley~\cite[Proposition 8.5]{Postnikov-Stanley},
and independently Shmulik Ravid, observed that, the correspondence
above establishes a bijection between the regions of
the Linial arrangement ${\mathcal L}_{n-1}$ and the set of
semiacyclic tournaments on the set $\{1,\ldots,n\}$.

\section{Alternation acyclic tournaments}
\label{sec:alta}

In this section we define alternation acyclic tournaments and show some
of their most basic properties. We define ascents and descents
essentially the same way as Postnikov and Stanley do it
in~\cite{Postnikov-Stanley}. The only minor difference is that we will 
use the notion of an ascent and a descent on all edges, not only on
those contained in a directed cycle.  
\begin{definition}
We call the arrow $i\rightarrow j$ an {\em ascent} if $i<j$ holds,
otherwise we call it a {\em descent}. We will use the notation $i\asc j$,
respectively $i\desc j$, to denote ascents and descents respectively.
A directed cycle $C=(c_0,c_1,\ldots,c_{2k-1})$ is {\em alternating} if ascents 
and descents alternate along the cycle, that is, $c_{2j}\desc
c_{2j+1}$ and $c_{2j+1}\asc c_{2j+2}$ hold for all $j$ (here we identify
all indices modulo $2k$). A tournament is {\em alternation acyclic (or
  alt-acyclic)} if it contains no alternating cycle.  
\end{definition}

Clearly an alternating cycle is also an ascending cycle, hence
every semiacyclic tournament is also alternation acyclic. In
Section~\ref{sec:alth} we will state an 
analogue of~\cite[Proposition 8.5]{Postnikov-Stanley} for alt-acyclic
tournaments, and we will explain how each Linial arrangement is a
section of a hyperplane arrangement whose regions are labeled with
alt-acyclic tournaments.  

A generalization of the notion of a directed cycle is a the notion of a
directed closed walk, in which revisiting vertices is allowed. The
following, important observation implies that, for tournaments,
excluding alternating closed walks vs. alternating cycles makes no
difference.
\begin{proposition}
  \label{prop:4cycle}
Suppose a tournament $T$ on $\{1,\ldots,n\}$ contains a closed
alternating walk $(c_0,c_1,\ldots,c_{2k-1})$, that is, a closed walk, in
which descents and ascents alternate. Then $T$ contains an alternating
cycle of length $4$. 
\end{proposition}  
\begin{proof}
Let $(c_0,c_1,\ldots,c_{2m-1})$ be an alternating closed walk
of minimum length in $T$. It suffices to show that we must have
$m=2$. Indeed, note that a closed alternating walk must have even
length, and there is no closed walk $c_0\rightarrow c_1\rightarrow c_0$
in a tournament, so we must have $m\geq 2$. Note also that a closed walk of
length $4$ must visit $4$ distinct vertices, as it can not be the
composition of a closed walk of length $3$ and one additional edge.

Assume, by contradiction, that we have $m\geq 3$.  As
usual, we will identify the indices modulo $2m$. Furthermore, without
loss of generality, we will assume that
the arrows $c_{2i}\rightarrow c_{2_{i+1}}$ are descents and the
arrows $c_{2i-1}\rightarrow c_{2i}$ are ascents.

It suffices to show that in such a closed alternating walk we must have
$c_{2i}<c_{2i+4}$ for all $i$. Since we assumed $m\geq 3$, this will
yield a contradiction of the form $c_0<c_4<\cdots <c_0$. We will
distinguish two cases:

{\bf\noindent Case 1:} $c_{2i}=c_{2i+3}$ holds. In this case the
statement follows from the fact that $c_{2i+3}\rightarrow c_{2i+4}$ is
an ascent.  

{\bf\noindent Case 2:} $c_{2i}\neq c_{2i+3}$ holds. In this case it
suffices to show that we must have $c_{2i}< c_{2i+3}$: the statement
follows then by transitivity from $c_{2i+3}\asc c_{2i+4}$. 
Assume, by contradiction, that $c_{2i}>c_{2i+3}$
holds. Then either we have $c_{2i+3}\asc c_{2i}$ and $(c_{2i}, c_{2i+1}, 
c_{2i+2}, c_{2i+3})$ is an alternating $4$-cycle, or we have
$c_{2i}\desc c_{2i+3}$ and we may use this descent to replace the subwalk
$(c_{2i}\desc c_{2i+1} \asc c_{2i+2}\desc c_{2i+3})$ in the closed walk, thus
obtaining a shorter alternating closed walk. In either case, we reach a
contradiction with the minimality of $m$.
\end{proof}  

The following consequence of Proposition~\ref{prop:4cycle} is analogous
to a result by Postnikov and Stanley~\cite[Theorem 8.6]{Postnikov-Stanley} 
which characterizes semiacyclic tournaments as tournaments containing no
ascending cycle of length at most $4$.
\begin{corollary}
  \label{cor:4cycle}
A tournament $T$ on $\{1,\ldots,n\}$ is alternation acyclic if and only
if it contains no alternating cycle of length $4$.   
\end{corollary}

Another way to characterize alternation acyclic tournaments is to
describe them in terms of the right-alternating walk relation.
\begin{definition}
In a tournament $T$ on $\{1,\ldots,n\}$,
there is a {\em right-alternating walk} from $u$ to $v$ if $u=v$ or
there is a directed walk $u=w_0\desc w_1 \asc w_2 \desc \cdots \desc
w_{2i-1}\asc w_{2i}=v$ from $u$ to $v$ in which descents and ascents
alternate, the first edge being a descent and the last edge being an
ascent. We will use the notation $u\rawalk v$ when there is a
right-alternating walk from $u$ to $v$, and we will refer to $\rawalk$
as the {\em right-alternating walk order induced by $T$}. We will also
use the shorthand notation $u\srawalk w$ when $u\rawalk v$ and $u\neq v$
hold.   
\end{definition}  

\begin{proposition}
  \label{prop:rawalk}
A tournament $T$ on $\{1,\ldots,n\}$ is alternation acyclic, if and
only the induced right-alternating walk order is a partial order.   
\end{proposition}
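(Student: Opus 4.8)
The plan is to verify directly the three defining properties of a partial order for the relation $\rawalk$, and to observe that reflexivity and transitivity hold for every tournament, so that the entire content of the proposition is carried by antisymmetry. Reflexivity is immediate, since the definition of a right-alternating walk explicitly includes the case $u=v$; hence $u\rawalk u$ holds for all $u$ regardless of $T$.

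Next I would establish transitivity unconditionally. Suppose $u\rawalk v$ and $v\rawalk w$. If either relation is an equality the conclusion is trivial, so assume both are witnessed by nontrivial walks. The walk witnessing $u\rawalk v$ ends with an ascent arriving at $v$, while the walk witnessing $v\rawalk w$ begins with a descent leaving $v$. Concatenating them at $v$ therefore produces a single walk in which descents and ascents still alternate, whose first edge is a descent and whose last edge is an ascent; this is exactly a right-alternating walk from $u$ to $w$, so $u\rawalk w$. Since this argument uses nothing about $T$, transitivity holds for arbitrary tournaments.

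The heart of the proof is the equivalence between antisymmetry of $\rawalk$ and alt-acyclicity of $T$, and here I would invoke Theorem~\ref{thm:4cycle} together with Corollary~\ref{cor:4cycle}. For one direction, suppose antisymmetry fails, so that there exist distinct vertices $u,v$ with $u\rawalk v$ and $v\rawalk u$; both are witnessed by nontrivial walks. Concatenating the walk from $u$ to $v$ with the walk from $v$ to $u$ yields a closed walk in which descents and ascents alternate: at the interior junction $v$ an incoming ascent is followed by an outgoing descent, and cyclically at $u$ the final ascent is followed by the initial descent. By Theorem~\ref{thm:4cycle} this closed alternating walk forces an alternating $4$-cycle, so $T$ is not alt-acyclic. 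Conversely, if $T$ is not alt-acyclic then by Corollary~\ref{cor:4cycle} it contains an alternating $4$-cycle on four distinct vertices; writing it as $c_0\desc c_1\asc c_2\desc c_3\asc c_0$, the subwalk $c_0\desc c_1\asc c_2$ gives $c_0\rawalk c_2$ and the subwalk $c_2\desc c_3\asc c_0$ gives $c_2\rawalk c_0$, so with $c_0\neq c_2$ antisymmetry fails.

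The argument presents no real obstacle once Theorem~\ref{thm:4cycle} is in hand; the only point deserving care is the bookkeeping at the junction vertices when concatenating right-alternating walks, where one must confirm that an incoming ascent followed by an outgoing descent preserves both the alternating pattern and the boundary conditions (first edge a descent, last edge an ascent) demanded by the definition. Assembling reflexivity, the unconditional transitivity, and the antisymmetry equivalence then yields the proposition.
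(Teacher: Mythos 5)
Your proof is correct and follows essentially the same route as the paper: reflexivity and transitivity are unconditional (via concatenation of right-alternating walks), and the equivalence of antisymmetry with alt-acyclicity is reduced to Theorem~\ref{thm:4cycle} and Corollary~\ref{cor:4cycle}. You merely spell out the junction bookkeeping and the two directions of the antisymmetry equivalence that the paper dismisses as ``easily seen,'' which is fine.
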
  
\begin{proof}
The relation $\rawalk$ is by definition reflexive and it is obviously
transitive, as the concatenation of right-alternating walks yields a
right-alternating walk. Hence the relation $\rawalk$ is a partial order
if and only if it is antisymmetric. This property is easily seen to be
equivalent to the non-existence of a nontrivial closed alternating walk,
whose non-existence is equivalent to the non-existence of an alternating
$4$-cycle by Proposition~\ref{prop:4cycle}. As noted in
Corollary~\ref{cor:4cycle}, the non-existence of an
alternating $4$-cycle is equivalent to the tournament being alt-acyclic.       
\end{proof}

\begin{remark}
{\em There is an apparent asymmetry in the definition of the
right-alternating walk order. One could analogously define the
left-alternating walk order $\lawalk$ using alternating walks that
begin with an ascent and end with a descent. It is similarly easy to see
the analogue of Proposition~\ref{prop:rawalk} stating that a tournament
is alt-acyclic, if and only if $\lawalk$ is a partial order. It should
be noted that the class of alternation acyclic tournaments is closed
under reversing all directed edges and it is also closed under
renumbering the vertices such that each $i\in \{1,\ldots,n\}$ is
replaced by $n+1-i$. Under each of these operations, the role of the
partial order $\rawalk$ is taken over by the partial order $\lawalk$ and
vice versa.}  
\end{remark}

\section{Representing alt-acyclic tournaments as biordered forests}
\label{sec:forest}

A partially ordered set is a {\em forest} if every element is covered by
at most one element. A formula counting linear extensions of a forest is
due to Knuth~\cite{Knuth}. For a bibliography on generalizations
and recent results we refer the reader to~\cite{Hivert-Reiner}. Note
that Hivert and Reiner use the dual definition of a forest, in which
every element covers at most one element. We follow the definition of
Bj\"orner and Wachs~\cite{Bjorner-Wachs}. In this section we will show
that every alternation acyclic tournament may be represented as a
tournament induced by a biordered forest, where one of the orders is a
linear extension, and the other one is an arbitrary permutation. 
We will think of the linear extension as a numbering of the elements
from $1$ to $n$, and we will encode the second numbering by a word
$\pi=\pi(1)\pi(2)\cdots \pi(n)$, where the label of $j\in\{1,\ldots,n\}$ is
the position $\pi^{-1}(j)$ of the number $j$ in $\pi$

If an element $i$ is covered by an element $j$ in a forest, we will
write $j=p(i)$ and say that $j$ is the {\em parent} of $i$. We will also use the
notation $p(i)=\infty$ when $i$ has no parent, and we will say that $i$
is a {\em root}. In fact, the Hasse diagram will be a union of trees,
and the roots will be exactly the maximum elements of these
trees. Marking the root of each tree defines the partial order.  
We fix a linear extension of a forest, by numbering its
elements in increasing order from $1$ to $n$, where $n$ is the number
of the elements. The parent function $p: \{1,2,\ldots,n\}\rightarrow
\{2,\ldots,n\}\cup\{\infty\}$ must then satisfy $i<p(i)$ 
for all $i\in \{1,2\ldots,n\}$. The converse is also true:
\begin{proposition}
\label{prop:ext1}
Given a forest on the set $\{1,2,\ldots,n\}$, defined by the parent
function $p:\{1,2,\ldots,n\}\rightarrow \{2,\ldots,n\}\cup\{\infty\}$, the
order $1<2<\cdots <n$ is a linear extension of the forest, if and
only if the parent function satisfies $i<p(i)$ for all $i\in
\{1,2,\ldots,n\}$.  
\end{proposition}
\begin{proof}
If the numbering represents a linear extension, then the
condition $i<p(i)$ is necessary. Conversely, assume the function $p$
satisfies the stated property. If $i$ is less than $j$ in
the order of the forest, then for the length $\ell$ of the shortest
path from $i$ to $j$ in the Hasse diagram we have $j=p^{\ell}(i)$,
implying $i<p(i)<p(p(i))<\cdots <p^{\ell}(i)=j$.
\end{proof}

From now on we will identify each element of the forest with its label in
a fixed linear extension, and we encode the forest with its parent
function $p:\{1,2,\ldots,n\}\rightarrow
\{2,\ldots,n\}\cup\{\infty\}$. Next we give a second labeling of the
vertices in terms of the {\em inverse} of a permutation $\pi$ of the set
$\{1,\ldots,n\}$.  
\begin{definition}
\label{def:pospi}  
Given a permutation  $\pi: \{1,2,\ldots,n\}\rightarrow
\{1,2,\ldots,n\}$, we will say that the {\em labeling induced by the
  positions in $\pi$}
is the labeling that associates to each $i\in \{1,2,\ldots, n\}$ the
position $\pi^{-1}(i)$ of $i$ in $\pi$.
\end{definition}  
\begin{definition}
\label{def:biordered} We will refer to an ordered triplet of a forest,
one of its linear extensions and an arbitrary numbering of its
elements as a {\em biordered forest}. We will encode the forest and its
linear extension with the corresponding parent function
$p:\{1,2,\ldots,n\}\rightarrow \{2,\ldots,n\}\cup\{\infty\}$, satisfying
$p(i)>i$ for all $i$, and the second numbering by the permutation $\pi$
whose positions induce the second numbering. We will refer to the pair
$(\pi,p)$ as the {\em code} of the biordered forest. 
\end{definition}  
The following statement is obvious.
\begin{proposition}
  \label{prop:code}
The correspondence described in Definition~\ref{def:biordered}
establishes a bijection between ordered triplets formed by a forest on
$n$ elements, a linear extension of this forest, and an arbitrary
numbering of its elements and ordered pairs $(\pi,p)$ formed by a
function  $p:\{1,2,\ldots,n\}\rightarrow
\{2,\ldots,n\}\cup\{\infty\}$, satisfying $p(i)>i$ for all $i$ and by a
permutation $\pi$ of $\{1,2,\ldots,n\}$. 
\end{proposition}  
\begin{remark}
{\em We could also use the language of $(P,\omega)$-partitions, see Stanley's
book~\cite{Stanley-EC1/2}. This begins with considering a partially
ordered set (in our case: a forest) and a bijection $\omega: P\rightarrow
\{1,2,\ldots,n\}$. Stanley calls the labeling $\omega$ {\em natural}
when $\omega$ is a linear extension of $P$. In such terms, we consider
forests with a pair of labelings, one of them natural, the other one
arbitrary.} 
  \end{remark}
Next we define a tournament induced by a biordered forest.
\begin{definition}
Let $(\pi,p)$ be the code of a biordered forest on $n$ elements. We
define the tournament $T=T(\pi,p)$ as {\em the tournament induced by
  the biordered forest} to be the tournament whose vertex set is
$\{1,2,\ldots, n\}$ and whose directed edges are defined as follows. For
all $u<v$ we set $u\asc v$ if $p(u)\neq \infty$ and $\pi^{-1}(v)\geq
\pi^{-1}(p(u))$ hold, otherwise we set $v\desc u$.
\end{definition}  
We may visualize the pair $(\pi,p)$ as an arc-diagram, shown in
Figure~\ref{fig:arcs}.

\begin{figure}[h]
\begin{center}
\input{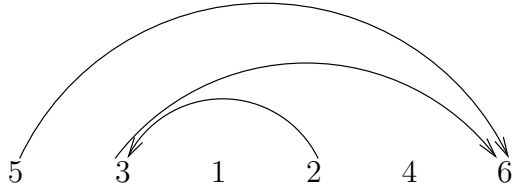}
\end{center}
\caption{Arc representation of a pair $(\pi,p)$}
\label{fig:arcs}
\end{figure}

We line up the vertices of the tournament left to right, in the order
$\pi(1)$, $\pi(2)$, \ldots, $\pi(n)$. The permutation $\pi$ in
Figure~\ref{fig:arcs} is $531246$. Next, for each $i$ such that
$p(i)\neq \infty$, we draw a directed arc from $i$ to 
$p(i)$. For example, in the picture there is an
arc from $\pi(4)=2$ in position $4$ to $\pi(2)=3$ in position
$2$, indicating $p(2)=3$. The number $3$ is the leftmost number larger
than $2$ for which $2\asc 3$. All numbers larger than $2$ that are to
the left of $3$ defeat $2$, and $2$ defeats all numbers larger than $2$
to the right of $3$. Hence we have $5\desc 2$, $2\asc 3$, $2\asc 4$
and $2\asc 6$.  Similarly we have $p(3)=6$ and so the only ascent
starting at $3$ is $3\asc 6$.  The parent of the numbers $\pi(3)=1$,
$\pi(5)=4$ and $\pi(6)=6$ is $\infty$, no arc begins at these vertices,
no ascent starts at these vertices. The parent function $p$ defines a
forest with three connected components, the roots of these three trees
are $1$, $4$ and $6$, respectively. 
  
The next two statements explain how biordered forests are related to
alt-acyclic tournaments. 

\begin{proposition}
  \label{prop:arcs-alta}
Every biordered forest $(\pi,p)$ induces an alternation acyclic tournament
$T$. Furthermore, the permutation $\pi$ is a linear extension of the
right alternating walk order induced by $T$.  
\end{proposition}  
\begin{proof}
Let $(\pi,p)$ be the code of the biordered forest on $n$ elements, and
let $T=T(\pi,p)$ be the tournament induced by it. First we show that $T$
is alt-acyclic. By  Corollary~\ref{cor:4cycle} it suffices to show that
there is no alternating cycle of length $4$ in $T$. Assume, by
contradiction, that $u_1\asc u_2\desc u_3\asc u_4\desc u_1$ holds for
some $\{u_1,u_2,u_3,u_4\}\subseteq \{1,2,\ldots,n\}$. Since we have
$u_1\asc u_2$ and $u_4\desc u_1$ we obtain that $u_4$ must appear to the
left of $p(u_1)$ whereas $u_2$ can not appear to the left of
$p(u_1)$ in $\pi$. In other words, we must have
$$\pi^{-1}(u_4)<\pi^{-1}(p(u_1))\leq \pi^{-1}(u_2).$$
Similarly $u_2\desc u_3$ and $u_3\asc u_4$ imply that we must have
$$\pi^{-1}(u_2)<\pi^{-1}(p(u_3))\leq \pi^{-1}(u_4).$$
This is a contradiction, as $u_2$ and $u_4$ can not mutually precede
each other in $\pi$.

To show the second part of the statement it suffices to prove that
$\pi^{-1}(u)<\pi^{-1}(v)$ holds whenever $v$ covers $u$ in the right
alternating walk order. For an arbitrary pair $(u,v)$, satisfying
$u\rawalk v$, the statement $\pi^{-1}(u)\leq \pi^{-1}(v)$ follows then by
considering a saturated chain from $u$ to $v$. If $v$ covers $u$
in the right alternating walk order, then there is a $w$ such that
$u\desc w\asc v$ holds. By the definition of $T(\pi,p)$, $u$ must be to
the left of $p(w)$, whereas $v$ can not be to the left of $p(w)$ in
$\pi$. In other words, $\pi^{-1}(u)<\pi^{-1}(p(w))\leq \pi^{-1}(v)$ must
hold, and $u$ is to the left of $v$ in $\pi$.
\end{proof}  

Our next result is the converse of Proposition~\ref{prop:arcs-alta}.

\begin{proposition}
\label{prop:alta-arc}
Let $T$ be an alternation acyclic tournament on $\{1,2,\ldots, n\}$, and
let $\pi$ be any linear extension of the right alternating walk
order. Then there is a unique parent function
$p:\{1,2,\ldots,n\}\rightarrow \{2,\ldots,n\}\cup\{\infty\}$ such that
the tournament induced by $(\pi, p)$ is $T$. 
\end{proposition}
\begin{proof}
Clearly, for each $u\in \{1,\ldots,n\}$, the only way to define $p(u)$
is to set $p(u)$ equal to the leftmost $v$ in $\pi$ such that $u\asc v$
holds, if such a $v$ exists, and to set $p(u)=\infty$ when no ascent
begins at $u$. We only need to verify that the tournament $T(\pi,p)$
induced by the biordered forest with code $(\pi,p)$ is the
same as the tournament $T$ we started with. Consider a pair $(u,v)$ of
vertices satisfying $u<v$. If $p(u)=\infty$ or
$v$ is to the left of $p(u)\neq \infty$ then, by the definition of $p$,
we must have $v\desc u$ in $T$ and also in $T(\pi,p)$. Also by
definition, $u\asc v$ holds in both tournaments, if $v=p(u)$. We are left to
consider the case when $v$ is to the right of $p(u)$ in $\pi$. In
$T(\pi, p)$ we must have $u\asc v$, the only remaining question is,
could we have $v\desc u$ in the tournament $T$, for such a vertex $v$? The
answer is no, since $v\desc u\asc p(u)$ would imply $v\srawalk p(u)$ in
contradiction with $\pi$ being a linear extension of the partial order
$\rawalk$.   
\end{proof}   

\begin{remark}
{\em For any alt-acyclic tournament $T$, the element $1$ is always
incomparable to the other elements of $\{1,\ldots,n\}$ in the right
alternating walk order, hence the partial order $\rawalk$ has always at
least two linear extensions. This makes the use of biordered forests to
directly count alt-acyclic tournaments difficult. We will see two
different ways to overcome this difficulty in Sections~\ref{sec:alth}
and \ref{sec:lmax}.}    
\end{remark}

\section{Counting alternation acyclic tournaments using hyperplane
  arrangements}
\label{sec:alth}

In this section we introduce a hyperplane arrangement whose regions are
in bijection with the alternation acyclic tournaments. Using a result of
Athanasiadis~\cite{Athanasiadis-charpol}, we will be able to count
them.

\begin{definition}
Consider the vector space ${\mathbb R}^{2n-1}$ with coordinate functions
$x_1$,\ldots,$x_n$, $y_1$,\ldots,$y_{n-1}$.   
We define the {\em homogenized Linial arrangement ${\mathcal H}_{2n-2}$}
as the set of hyperplanes
\begin{equation}
\label{eq:hL}  
x_i-x_j=y_i \quad 1\leq i<j\leq n
\end{equation}
in the subspace $U_{2n-2}\subset {\mathbb R}^{2n-1}$ given by
$$U_{2n-2}=\{(x_1,\ldots,x_n,y_1,\ldots,y_{n-1})\in {\mathbb R}^{2n-1}\:
: \: x_1+\cdots +x_n=0\}.$$  
\end{definition}
\begin{remark}
\label{rem:adjust}  
{\em Restricting our arrangement in ${\mathbb R}^{2n-1}$ to the set
$U_{2n-2}$ does not change the 
number of regions, because of the following observation: given a point
$$(x_1,\ldots,x_n,y_1,\ldots,y_{n-1})\in {\mathbb R}^{2n-1},$$
all points of the line
$$
\{(x_1+t,\ldots,x_n+t,y_1,\ldots,y_{n-1})\::\: t\in {\mathbb R}\}
$$
belong to the same region of ${\mathcal H}_{2n-2}$,
  considered as a hyperplane arrangement in ${\mathbb R}^{2n-2}$, since
  $(x_i+t)-(x_j+t)=x_i-x_j$ holds for all $1\leq i<j\leq n$. There is
  exactly one choice of $t$ on this line for which the sum of the
  $x$-coordinates is zero. Intersecting our picture with $U_{2n-2}$
  allows us to get rid of an inessential dimension. It also makes our
  definition more compatible with the usual definition of the Linial
  arrangement, due to the following observation. Intersecting ${\mathcal
    H}_{2n-2}$ with all hyperplanes of the form $y_j=1$
    yields a hyperplane arrangement, which, after discarding the
    redundant $y$-coordinates, is exactly the Linial arrangement
    ${\mathcal L}_{n-1}$.}   
\end{remark}
Next we associate to each region $R$ of the homogenized Linial arrangement
${\mathcal H}_{2n-2}$ a tournament $T(R)$ on $\{1,\ldots,n\}$ as
follows. For each $i<j$, set $i\rightarrow j$ if the points of the
region satisfy $x_i-y_i>x_j$, and set $j\rightarrow i$ if $x_i-y_i<x_j$
holds for all points in the region. The correspondence $R\mapsto T(R)$
is clearly well-defined and injective.

\begin{theorem}
  \label{thm:regions}
The correspondence $R\mapsto T(R)$ establishes a bijection between all
regions of the homogenized Linial arrangement
${\mathcal H}_{2n-2}$ and all alternation acyclic tournaments
on the set $\{1,\ldots,n\}$ 
\end{theorem}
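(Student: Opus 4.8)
The plan is to take the well-definedness and injectivity of $R\mapsto T(R)$ for granted (they are immediate: inside a region each linear form $x_i-y_i-x_j$, $i<j$, has constant sign, so $T(R)$ is determined by $R$, and since these are exactly the forms cutting out ${\mathcal H}_{2n-2}$, two distinct regions differ in at least one sign and hence in at least one arrow). What remains is to show that the image consists precisely of the alt-acyclic tournaments, i.e.\ that every $T(R)$ is alt-acyclic and that every alt-acyclic tournament arises as some $T(R)$.

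For the forward direction I would fix a point $(x_1,\ldots,x_n,y_1,\ldots,y_{n-1})$ in a region $R$ and invoke Corollary~\ref{cor:4cycle}, so that it suffices to rule out an alternating $4$-cycle in $T(R)$. Writing such a cycle as $a\asc b\desc c\asc d\desc a$ (so $a<b$, $c<b$, $c<d$, $a<d$) and reading each arrow through the defining rule $i\asc j\iff x_i-y_i>x_j$ for $i<j$, the two ascents and the two descents translate into $x_b<x_a-y_a<x_d$ (from $a\asc b$ and $d\desc a$) and $x_d<x_c-y_c<x_b$ (from $c\asc d$ and $b\desc c$). These force simultaneously $x_b<x_d$ and $x_d<x_b$, a contradiction; hence $T(R)$ contains no alternating $4$-cycle and is alt-acyclic.

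For surjectivity I would use the coding machinery of Section~\ref{sec:forest}. Given an alt-acyclic tournament $T$, Theorem~\ref{thm:alta-arc} produces a code $(\pi,p)$ with $T(\pi,p)=T$, where $\pi$ is a linear extension of $\rawalk$ and $p(u)$ is the leftmost $v$ in $\pi$ with $u\asc v$. I would then realize this code by an explicit point: assign the $x$-coordinates so that they are strictly decreasing along $\pi$ (so $\pi^{-1}(v)<\pi^{-1}(w)$ iff $x_v>x_w$), and for each $u$ choose the threshold $t_u:=x_u-y_u$ to lie strictly between $x_{p(u)}$ and the next-larger $x$-value among the vertices exceeding $u$ (and strictly below all $x_v$ with $v>u$ when $p(u)=\infty$). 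Since $t_u$ is governed by $y_u$ alone, these choices are independent and can all be met; the resulting point avoids every hyperplane $x_u-y_u=x_v$, and by construction it satisfies $x_v<t_u\iff\pi^{-1}(v)\ge\pi^{-1}(p(u))$ for $v>u$, so the induced tournament is exactly $T(\pi,p)=T$. Finally, shifting all $x_i$ by a common constant places the point in $U_{2n-2}$ without altering any difference $x_i-y_i-x_j$, hence without changing the tournament (Remark~\ref{rem:adjust}).

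The forward direction and the bookkeeping for injectivity are routine; the main obstacle is surjectivity, namely producing for each alt-acyclic tournament an honest point of $U_{2n-2}$ that induces it. The work of Section~\ref{sec:forest} is what makes this tractable: the code $(\pi,p)$ supplies precisely the order data (read off as the decreasing $x$-coordinates) and the gap data (telling us where to slot each threshold $x_u-y_u$), so the remaining task is the elementary one of checking that the prescribed intervals for the thresholds are nonempty and mutually independent.
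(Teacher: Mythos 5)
Your proof is correct and takes essentially the same route as the paper: the forward direction is the identical alternating-four-cycle contradiction via Corollary~\ref{cor:4cycle}, and surjectivity is established exactly as in the paper by taking the code $(\pi,p)$ from Theorem~\ref{thm:alta-arc} and realizing it by a point whose $x$-coordinates decrease along $\pi$ with each threshold $x_u-y_u$ placed just beyond $x_{p(u)}$. The paper merely makes one explicit choice of such a point, namely $x_i=\frac{n+1}{2}-\pi^{-1}(i)$ and $y_i=\pi^{-1}(p(i))-\pi^{-1}(i)-\frac{1}{2}$ (which already sums to zero, so no final shift into $U_{2n-2}$ is needed), and your interval construction is the general form of that choice.
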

\begin{proof}
First we show that every tournament associated to a region is
alt-acyclic. Assume, by contradiction, that there is a region
$R$, such that the tournament $T(R)$ is not alt-acyclic. By
Corollary~\ref{cor:4cycle} this implies the existence of an alternating
$4$-cycle $i_1\desc i_2\asc i_3\desc i_4\asc i_1$. By the definition of $T(R)$,
all points of the region $R$ satisfy
$$
x_{i_1}>x_{i_2}-y_{i_2}>x_{i_3}\quad\mbox{because of $i_1\desc i_2\asc i_3$, and
}  
$$
$$
x_{i_3}>x_{i_4}-y_{i_4}>x_{i_1}\quad\mbox{because of $i_3\desc i_4\asc i_1$}.   
$$
We obtain the contradiction $x_{i_1}>x_{i_1}$. 

Next we show that every alt-acyclic tournament $T$ on $\{1,\ldots,n\}$
is of the form $T(R)$ for some region $R$. Consider an alt-acyclic
tournament $T$. By Proposition~\ref{prop:alta-arc}, the tournament $T$ is
induced by a biordered forest with code $(\pi, p)$. Let us set
$$x_i=\frac{n+1}{2}-\pi^{-1}(i)\quad\mbox{for $i=1,2,\ldots,n$}$$
and let us set
$$y_i:=\pi^{-1}(p(i))- \pi^{-1}(i)-1/2 \quad\mbox{for
  $i=1,\ldots,n-1$.}$$
Observe first that we have
$$
\sum_{i=1}^n x_i=\sum_{i=1}^n \left(\frac{n+1}{2}-\pi^{-1}(i)\right)=
\frac{n(n+1)}{2}-\sum_{i=1}^{n} i=0,
$$
hence the point $(x_1,\ldots,x_n,y_1,\ldots,y_{n-1})$ belongs to the
vector space $U_{2n-2}$. Observe next, that for each $i<j$ the
difference $x_i-x_j=\pi^{-1}(j)-\pi^{-1}(i)$ is the difference between
the positions of $j$ and $i$. This integer is strictly more than
$y_i=\pi^{-1}(p(i))-\pi^{-1}(i)-1/2$ exactly when $j=p(i)$ or $j$ is to
the right of $p(i)$ in $\pi$. Therefore $T(R)$ is exactly the
tournament induced by the biordered forest whose code is $(\pi,p)$.   
\end{proof}

Now we are ready to prove one of the main results of our paper.

\begin{theorem}
\label{thm:all-alta}
The number of alternation acyclic tournaments on the set
$\{1,\ldots,n\}$ is the median Genocchi number $H_{2n-1}$.
\end{theorem}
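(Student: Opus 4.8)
The plan is to feed the geometric bijection of Theorem~\ref{thm:regions} into the finite-field method of Zaslavsky and Athanasiadis, and then to identify the resulting point-count polynomial at $q=-1$ with a median Genocchi number through the Legendre-Stirling formula~\eqref{eq:HLS}.

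First I would use Theorem~\ref{thm:regions} to replace the number of alt-acyclic tournaments on $\{1,\dots,n\}$ by the number of regions $r(\mathcal H_{2n-2})$ of the homogenized Linial arrangement, which lives in the $(2n-2)$-dimensional space $U_{2n-2}$ and is defined over the integers. Since $2n-2$ is even, Zaslavsky's formula~\eqref{eq:Zaslavsky} gives $r(\mathcal H_{2n-2})=\chi(\mathcal H_{2n-2},-1)$, and Athanasiadis's theorem~\eqref{eq:Christos} identifies $\chi(\mathcal H_{2n-2},q)$, for all large primes $q$, with the number of points of $U_{2n-2}(\mathbb F_q)$ lying on none of the hyperplanes $x_i-x_j=y_i$.

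The heart of the argument is the explicit evaluation of this point count. Because every constraint involves only a difference $x_i-x_j$, simultaneous translation of all $x$-coordinates shows that imposing $x_1+\cdots+x_n=0$ merely divides the unrestricted count by $q$; equivalently, $x_1$ occurs in no constraint and contributes a free factor of $q$. For each remaining index $i$ the conditions $x_i-x_j\neq y_i$ with $j>i$ involve only the single variable $y_i$, so they are independent across $i$, and the number of admissible values of $y_i$ equals $q$ minus the number of distinct values among $x_{i+1},\dots,x_n$. After reindexing and reversing the order of the $x$'s, these observations yield the polynomial identity $\chi(\mathcal H_{2n-2},q)=\sum_{u\in\mathbb F_q^{\,n-1}}\prod_{\ell=1}^{n-1}(q-e_\ell)$, where $e_\ell$ is the number of distinct letters among $u_1,\dots,u_\ell$.

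Finally I would evaluate this sum at $q=-1$ by a transfer-matrix recurrence on the word $u$, tracking the running number $s$ of distinct letters: writing $W_t(s)$ for the weighted sum after $t$ letters, one obtains $W_{t+1}(s)=s(q-s)W_t(s)+(q-s+1)(q-s)W_t(s-1)$. Setting $q=-1$, the substitution $W_t(s)=(-1)^{t-s}\,s!\,(s+1)!\,PS_t^{(s)}$ converts this into the defining recurrence $PS_{t+1}^{(s)}=PS_t^{(s-1)}+s(s+1)PS_t^{(s)}$ of the Legendre-Stirling numbers, so that $\chi(\mathcal H_{2n-2},-1)=\sum_{s\geq1}(-1)^{\,n-1-s}\,s!\,(s+1)!\,PS_{n-1}^{(s)}$. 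The hard part is that this is not literally the right-hand side of~\eqref{eq:HLS}, which is phrased with $PS_n^{(k)}$ and $(k!)^2$ rather than $PS_{n-1}^{(s)}$ and $s!\,(s+1)!$; I expect this reconciliation to be the main obstacle, and I would resolve it by inserting the recurrence $PS_n^{(k)}=PS_{n-1}^{(k-1)}+k(k+1)PS_{n-1}^{(k)}$ into~\eqref{eq:HLS}, shifting the index in the first resulting sum, and using the simplification $((s+1)!)^2-s(s+1)(s!)^2=s!\,(s+1)!$ to match the two expressions term by term, thereby proving that the region count is exactly $H_{2n-1}$.
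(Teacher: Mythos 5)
Your proposal is correct, and it reaches the result by the same overall pipeline as the paper (Theorem~\ref{thm:regions}, then Zaslavsky~\eqref{eq:Zaslavsky} with Athanasiadis~\eqref{eq:Christos}, then a recurrence converted into the Legendre--Stirling recurrence and matched against~\eqref{eq:HLS}), but the middle of the computation is organized genuinely differently. The paper never eliminates the $y$-variables: it adds an inessential coordinate $y_n$, works in ${\mathbb F}_q^{2n}$, stratifies points by $k=|\{x_1-y_1,\ldots,x_n-y_n\}|$, and derives the recurrence~\eqref{eq:chirec}, whose coefficient $(q-k+1)^2$ is exactly such that at $q=-1$ the substitution $r(n,k)=(-1)^{n-k}(k!)^2PS_n^{(k)}$ reproduces the right-hand side of~\eqref{eq:HLS} verbatim, with no further algebra. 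You instead integrate out $y_1,\ldots,y_{n-1}$ first, obtaining the clean intermediate identity $\chi({\mathcal H}_{2n-2},q)=\sum_{u\in{\mathbb F}_q^{n-1}}\prod_{\ell=1}^{n-1}(q-e_\ell)$, where $e_\ell$ counts distinct letters among $u_1,\ldots,u_\ell$; your transfer-matrix recurrence then carries the coefficient $(q-s+1)(q-s)$ rather than $(q-k+1)^2$, the natural substitution is $(-1)^{t-s}\,s!\,(s+1)!\,PS_t^{(s)}$ rather than the paper's, and you land on $\sum_{s}(-1)^{n-1-s}\,s!\,(s+1)!\,PS_{n-1}^{(s)}$, one index lower than~\eqref{eq:HLS}. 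The reconciliation you flag as the main obstacle does go through exactly as you describe: inserting~\eqref{eq:LSrec} into~\eqref{eq:HLS}, shifting the index, and using $((s+1)!)^2-s(s+1)\,(s!)^2=s!\,(s+1)!$ gives precisely your sum, provided one also uses the standard conventions $PS_t^{(s)}=0$ for $s>t$ and for $s=0<t$. What your route buys is an explicit combinatorial formula for the characteristic polynomial of the homogenized Linial arrangement (a weighted count of words by prefix-distinctness), which the paper never writes down and which makes polynomiality in $q$ transparent; what the paper's bookkeeping buys is that the point count at $q=-1$ matches the Claesson--Kitaev--Ragnarsson--Tenner formula on the nose. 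One small correction: your parenthetical that ``$x_1$ occurs in no constraint'' is not literally true, since the constraints $y_1\neq x_1-x_j$ involve $x_1$; what your argument actually needs (and what is true) is that the product $\prod_{i}(q-d_i)$ is independent of the value of $x_1$ --- equivalently, the translation-invariance you also invoke --- so summing over $x_1$ still contributes a clean factor of $q$.
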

\begin{proof}
By Theorem~\ref{thm:regions} the statement is equivalent to showing that
the number of regions in the homogenized Linial arrangement ${\mathcal
  H}_{2n-2}$ is $H_{2n-1}$. We will find this number using Zaslavsky's
formula (\ref{eq:Zaslavsky}), where we compute the characteristic
polynomial using Athanasiadis' result (\ref{eq:Christos}). 
To simplify our calculations, instead of applying (\ref{eq:Christos}) to
the hyperplane arrangement ${\mathcal H}_{2n-2}$ directly, we will count the
regions of the hyperplane arrangement $\widehat{{\mathcal H}}_{2n}$,
given by the equations (\ref{eq:hL}) in ${\mathbb R}^{2n}$ with
coordinate functions $x_1$,\ldots,$x_n$, $y_1$,\ldots,$y_{n}$. In other
words, rather than removing one inessential dimension by restricting to
the subspace $U_{2n-2}$ (keep in mind Remark~\ref{rem:adjust} pointing
out that this restriction does not change the number of regions), we add
an additional inessential dimension $y_n$ that is not involved in the
equations defining the hyperplanes. The proof of
Theorem~\ref{thm:regions} may be applied to show that the number of regions 
is the same as the number of alt-acyclic tournaments on $\{1,\ldots,n\}$
(with the remark that the value of $y_n$ may be chosen in an arbitrary
fashion). Let us now consider the hyperplane arrangement $\widehat{{\mathcal
    H}}_{2n}$ as the subset of  ${\mathbb F}_q^{2n}$ for some very large prime 
$q$. Let us introduce the shorthand notation $\chi(n,k,q)$ for the
number of elements in the set 
$$
{\mathcal S}_{n,k}(q)=\left\{(x_1,\ldots,x_n, y_1,\ldots, y_{n})\in
{\mathbb F}_q^{2n}-\bigcup 
\widehat{\mathcal H}_{2n}\::\: |\{x_1-y_1,\ldots, x_{n}-y_{n}\}|=k
\right\}.
$$
In other words, ${\mathcal S}_{n,k}(q)$ is the set of those points in
${\mathbb F}_q^{2n}-\bigcup \widehat{\mathcal H}_{2n}$, for which the
set\hfill\break  $\{x_1-y_1,\ldots, x_{n}-y_{n}\}$ has $k$ elements. By
(\ref{eq:Christos}), we must have
\begin{equation}
\label{eq:Hsum}  
\chi(\widehat{{\mathcal H}}_{2n},q)=\sum_{k=1}^n \chi(n,k,q).
\end{equation}
We claim that the numbers
$\chi(n,k,q)$ satisfy the recurrence 
\begin{equation}
\label{eq:chirec}  
\chi(n,k,q)=(q-k)\cdot k \cdot \chi (n-1,k,q)+(q-k+1)^2\cdot 
\chi(n-1,k-1,q) \quad\mbox{for $n\geq 2$.}
\end{equation}
Indeed, let us first select the values of  $x_1,\ldots,x_{n-1}$ and
$y_1,\ldots, y_{n-1}$ of a vector belonging to ${\mathcal S}_{n,k}(q)$.
Since the set $\{x_1-y_1,\ldots, x_{n}-y_{n}\}$ has either the same size
as $\{x_1-y_1,\ldots, x_{n-1}-y_{n-1}\}$ or it has just one more
element, the set $\{x_1-y_1,\ldots, x_{n-1}-y_{n-1}\}$ must have $k$ or
$k-1$ elements. Furthermore the coordinates   $x_1,\ldots,x_{n-1}$ and
$y_1,\ldots, y_{n-1}$ do not satisfy those equations
from (\ref{eq:hL}) which do not involve $x_n$ or $y_n$.
Depending on the choice between $k$ and $k-1$, this selection
may be performed in $\chi (n-1,k,q)$ or $\chi (n-1,k-1,q)$ ways,
respectively. In the case when $\{x_1-y_1,\ldots,x_{n-1}-y_{n-1}\}$ has
$k$ elements, there are $q-k$ ways to select the value of $x_n$ from the
complement of the set $\{x_1-y_1,\ldots,x_{n-1}-y_{n-1}\}$. Once this
selection is made, we may select $y_n$ in $k$ ways, making sure that
$x_n-y_n$ belongs to the set $\{x_1-y_1,\ldots,x_{n-1}-y_{n-1}\}$. 
Similarly, in the case when $\{x_1-y_1,\ldots,x_{n-1}-y_{n-1}\}$ has $k-1$
elements, there are $q-k+1$ ways to select the value of $x_n$, and also
$q-k+1$ ways to select the value of $y_n$ afterward. Both $x_n$ and
$x_n-y_n$ must belong to the complement of
$\{x_1-y_1,\ldots,x_{n-1}-y_{n-1}\}$ in this case. Using the initial
condition
\begin{equation}
\label{eq:chiinit}  
\chi(1,k,q)=\delta_{1,k} q^2
\end{equation}  
(where $\delta_{1,k}$ is the Kronecker delta function), the polynomials
$\chi(n,k,q)$ may be computed. Since, for each $n$, the ambient space is
$2n$ dimensional, the number of regions of $\widehat{\mathcal H}_{2n}$
is equal to
$$
(-1)^{2n} \chi\left(\widehat{\mathcal H}_{2n},-1\right)=
\chi\left(\widehat{\mathcal H}_{2n},-1\right)=
\sum_{k=1}^n \chi(n,k,-1).
$$
Introducing $r(n,k):=\chi(n,k,-1)$, the initial condition
(\ref{eq:chiinit}) yields $r(1,1)=1$ and the recurrence
  (\ref{eq:chirec}) may be rewritten as
\begin{equation}
  \label{eq:rrec}
r(n,k)=-(k+1)\cdot k\cdot r(n-1,k) + k^2\cdot r(n-1,k-1).
\end{equation}  
Introducing
$$
PS_n^{(k)}=\frac{(-1)^{n-k}\cdot r(n,k)}{(k!)^2},
$$
the initial condition $r(1,1)=1$ may be transcribed as $PS_1^{(1)}=1$,
and the recurrence (\ref{eq:rrec}) may be rewritten as
\begin{equation}
\label{eq:LSrec}  
PS_n^{(k)}=k(k+1)\cdot PS_{n-1}^{(k)}+PS_{n-1}^{(k-1)}.
\end{equation}
Equation (\ref{eq:LSrec}) is a recurrence relation satisfied by the
Legendre-Stirling numbers, shown by Andrews, Gawronski and
Littlejohn~\cite[Theorem 5.3]{Andrews-LS}, and the initial conditions
also 
match. We obtain that $r(n,k)=(-1)^{n-k}\cdot (k!)^2\cdot
PS_n^{(k)}$, and that
$$
r\left({\mathcal H}_{2n-2}\right)
=r\left(\widehat{{\mathcal H}}_{2n}\right)
=\sum_{k=1}^n (-1)^{n-k}\cdot (k!)^2\cdot PS_n^{(k)}. 
$$
It was shown in~\cite{Claesson-LS} (see Equation~(\ref{eq:HLS}))
that the above sum equals the median Genocchi number $H_{2n-1}$.
\end{proof}

\section{Direct counting using the largest maximum order}
\label{sec:lmax}

By Proposition~\ref{prop:alta-arc}, given an alternation acyclic
tournament $T$,  
after fixing a linear extension $\pi$ of the partial order $\rawalk$,
there is a unique parent function $p$ such that the biordered forest
encoded by $(\pi,p)$ induces $T$. In this section we fix one such
linear extension for each alternation acyclic tournament and
describe how to recognize the valid pairs $(\pi,p)$. This will allow us
to directly count alternation acyclic tournaments of various kinds.

\begin{definition}
For an alternation acyclic tournament $T$ on $\{1,\ldots,n\}$, we define
the {\em largest maximal order} to be the permutation
$\lambda=\lambda(1)\cdots\lambda(n)$, given recursively as follows:
\begin{enumerate}
\item $\lambda(n)$ is the largest maximal element of $\{1,\ldots,n\}$
  ordered by $\rawalk$.
\item Once $\lambda(j)$ has been determined for all $j>k$, $\lambda(k)$
  is the largest maximal element in the poset obtained by restricting
  the partial order $\rawalk$ to $\{1,\ldots,n\}- \{\lambda(k+1),\ldots,\lambda(n)\}$.    
\end{enumerate}   
We call the unique pair
$(\lambda,p)$ inducing $T$ the {\em largest maximal representation of $T$}.  
\end{definition}
Note that the largest maximal order is necessarily a linear extension of
the partial order $\rawalk$, and that each $\lambda(k)$ is the largest
maximal element in the poset obtained by restricting the partial order
$\rawalk$ to the set $\{\lambda(1),\ldots,\lambda(k)\}$.
For example, the largest maximal order for the tournament induced by the
pair $(\pi,p)$ shown in Figure~\ref{fig:arcs} is $125346$, and the
largest maximal representation is shown in
Figure~\ref{fig:arcs-lmax}. It is easy to verify that this diagram
induces the same tournament, the fact that this is the largest maximal
representation will be easily verifiable using
Proposition~\ref{prop:lmaxchar} below. 
\begin{figure}[h]
\begin{center}
\input{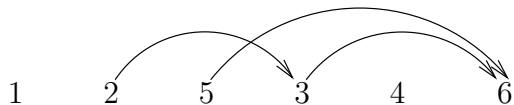}
\end{center}
\caption{Largest maximal representation of the tournament induced in
  Figure~\ref{fig:arcs}}
\label{fig:arcs-lmax}
\end{figure}
\begin{remark}
{\em Consider the largest maximal representation shown in
Figure~\ref{fig:arcs-lmax-rem}. Here $\pi(4)=4$ is the largest maximal
element of the set $\{1,2,5,4\}$ because we have $5\desc 3\asc 4$ and so
$5\srawalk 4$ holds. We only discarded the vertex $3$ from the set of
elements to be considered as a maximal element, but we can not correctly
interpret the restriction of the partial order to the subset
$\{1,2,5,4\}$ without considering the relation of $3$ to $4$ and $5$ in
the entire tournament.}     
\end{remark}
\begin{figure}[h]
\begin{center}
\input{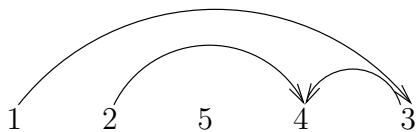}
\end{center}
\caption{Largest maximal representation illustrating the importance of
  ``discarded'' elements}
\label{fig:arcs-lmax-rem}
\end{figure}
The next statement completely characterizes the largest maximal
representations. Recall that $i\in \{1,\ldots, n\}$ is a {\em descent}
of the permutation $\pi$ of $\{1,\ldots, n\}$ if $\pi(i)>\pi(i+1)$
holds. 
\begin{proposition}
  \label{prop:lmaxchar} Given a permutation $\lambda$ of $\{1,\ldots,n\}$
  and a parent function
  $$p:\{1,2,\ldots,n\}\rightarrow \{2,\ldots,n\}\cup\{\infty\},$$
  the pair $(\lambda,p)$ is the largest
  maximal representation of the tournament induced by $(\lambda,p)$ if
  and only if for each descent $i$ of $\lambda$, the vertex
  $\lambda(i+1)$ belongs to the range of $p$. 
\end{proposition}
\begin{proof}
Assume first that $(\lambda,p)$ is a largest maximal representation and
that $i$ is a descent of $\lambda$. By definition $\lambda(i)$ is a
maximal element in the subset $\{\lambda(1),\ldots,\lambda(i)\}$,
ordered by $\rawalk$, but it is not a maximal element in the subset
$\{\lambda(1),\ldots,\lambda(i), \lambda(i+1)\}$, since $\lambda(i+1)$
is the largest maximal element in the latter set, and it is smaller than
$\lambda(i)$. Hence $\lambda(i+1)$ must cover $\lambda(i)$ in the
restriction of $\rawalk$ to $\{\lambda(1),\ldots,
\lambda(i+1)\}$. Since, for any, $k>i$, the relation $\lambda(k)\srawalk
\lambda(i+1)$ can not hold, the relation $\lambda(i)\srawalk \lambda(i+1)$
is also a cover relation in the entire set $\{1,\ldots,n\}$. Thus there
is a $j\in\{1,\ldots,n\}$ such that $\lambda(i)\desc j\asc \lambda(i+1)$
holds. Since $\lambda(i)$ is immediately to the left of $\lambda(i+1)$,
we must have $p(j)=\lambda(i+1)$.

Assume next that an alt-acyclic tournament is induced by a code $(\pi,
p)$, in which for each descent $i$ of $\pi$, the element $\pi(i+1)$
belongs to the range of $p$. We will show by induction on $k$ that for
each $k\in \{1,\ldots,n\}$ the vertex $\pi(k)$ is the largest maximal
element of the set $\{\pi(1),\ldots,\pi(k)\}$. For $k=1$ we must have
$\pi(1)=1$ since setting $1=\pi(i+1)$ for some $i\geq 1$ would make $i$
a descent and $1$ is never in the range of the parent function
$p$. Assume now that the statement holds for some $k$ and consider the
set $\{\pi(1),\ldots,\pi(k+1)\}$. Since, by
Proposition~\ref{prop:arcs-alta}, the 
permutation $\pi$ is a linear extension of the partial order $\rawalk$,
the element $\pi(k+1)$ is a maximal element of the set
$\{\pi(1),\ldots,\pi(k+1)\}$ ordered by $\rawalk$, we only need to show
that it is the largest maximal element. There is nothing to prove when
$\pi(k)<\pi(k+1)$ holds: adding $\pi(k+1)$ to the set  
$\{\pi(1),\ldots,\pi(k)\}$ can only decrease the list of the maximal
elements and, by our induction
hypothesis,  $\pi(k)$ was the largest element on this list before we
added $\pi(k+1)$. We are left to consider the case when $\pi(k)>\pi(k+1)$
holds, that is, $k$ is a descent. By our assumption there is a
$j<\pi(k+1)$ satisfying $\pi(k+1)=p(j)$. Consider any $i\leq k$ for
which $\pi(i)>\pi(k+1)$ holds. This element is to the left of
$\pi(k+1)=p(j)$ and it is larger than $j$. Hence we have $\pi(i)\desc
j\asc \pi(k+1)$, implying $\pi(i)\srawalk \pi(k+1)$. We obtained that no
element of $\{\pi(1),\ldots,\pi(k+1)\}$ that is larger than $\pi(k+1)$
can be a maximal element in this set, with respect to
$\rawalk$. Therefore $\pi(k+1)$ is the largest maximal element.  
\end{proof}  

Proposition~\ref{prop:lmaxchar} allows us to count alt-acyclic tournaments in
a recursive fashion, by using the following {\em reduction} operation.
\begin{definition}
\label{def:reduction}  
Given the largest maximal representation $(\lambda,p)$ of an alternation
acyclic tournament $T$ on $\{1,\ldots,n\}$ for some $n\geq 2$, we define
its {\em reduction} to the set $\{1,\ldots,n-1\}$ to be the alternation
acyclic tournament $T'$ with largest maximal representation
$(\lambda',p')$ where
$$
\lambda'(i)=
\begin{cases}
\lambda(i) & \mbox{if $i< \lambda^{-1}(n)$;}\\
\lambda(i+1) & \mbox{if $i\geq \lambda^{-1}(n)$;}\\
\end{cases}
\quad\mbox{and}\quad
p'(i)=
\begin{cases}
p(i)& \mbox{if $p(i)\neq n$;}\\
\infty & \mbox{if $p(i)= n$.}\\  
\end{cases}  
$$
In other words, the permutation $\lambda'(1)\cdots \lambda'(n-1)$ is
obtained from $\lambda(1)\cdots \lambda(n)$ by deleting the letter $n$,
and the parent function $p'$ is obtained from $p$ by changing all values
$p(i)=n$ to $p'(i)=\infty$.
\end{definition}
\begin{proposition}
\label{prop:reduction}
If $(\lambda,p)$ is the largest maximal representation of an alternation
acyclic tournament $T$ on $\{1,\ldots,n\}$ then the pair $(\lambda',p')$
given in Definition~\ref{def:reduction} is the largest maximal    
representation of an alternation acyclic tournament on
$\{1,\ldots,n-1\}$.
\end{proposition}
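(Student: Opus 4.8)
The plan is to reduce everything to the characterization of largest maximal representations in Theorem~\ref{thm:lmaxchar}. First I would check that $(\lambda',p')$ is a legitimate code. Deleting the letter $n$ from the permutation $\lambda$ of $\{1,\ldots,n\}$ visibly produces a permutation $\lambda'$ of $\{1,\ldots,n-1\}$. For $p'$: every finite value of $p$ other than $n$ already lies in $\{2,\ldots,n-1\}$ and satisfies $p(i)>i$, while replacing a value by $\infty$ preserves the requirement $p'(i)>i$; hence $p'$ is a valid parent function $\{1,\ldots,n-1\}\rightarrow\{2,\ldots,n-1\}\cup\{\infty\}$. By Theorem~\ref{thm:arcs-alta} the code $(\lambda',p')$ then induces an alternation acyclic tournament $T'$ on $\{1,\ldots,n-1\}$, so it only remains to verify that $(\lambda',p')$ is in fact the \emph{largest maximal} representation of $T'$, i.e.\ that for every descent $i$ of $\lambda'$ the letter $\lambda'(i+1)$ lies in the range of $p'$.

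I would organize this check around the position $m=\lambda^{-1}(n)$ of the deleted letter, splitting the descents $i$ of $\lambda'$ (those lie in $\{1,\ldots,n-2\}$) into the three groups $i<m-1$, $i=m-1$, and $i\geq m$, which together cover all positions. For $i<m-1$ the pair $(\lambda'(i),\lambda'(i+1))$ coincides with $(\lambda(i),\lambda(i+1))$, so $i$ is a descent of $\lambda'$ exactly when it is a descent of $\lambda$; for $i\geq m$ the pair $(\lambda'(i),\lambda'(i+1))$ equals $(\lambda(i+1),\lambda(i+2))$, so $i$ is a descent of $\lambda'$ exactly when $i+1$ is a descent of $\lambda$. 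In both groups the relevant letter $\lambda'(i+1)$ equals a letter of $\lambda$ sitting at a position different from $m$, hence different from $n$, and it lies in the range of $p$ by the largest maximal property of $(\lambda,p)$. Writing $j$ for a vertex with $p(j)=\lambda'(i+1)$, we get $p(j)\neq n$ and $j<p(j)\leq n-1$, so that $j$ lies in the domain of $p'$ and $p'(j)=p(j)=\lambda'(i+1)$; thus $\lambda'(i+1)$ survives in the range of $p'$.

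The only genuinely new case, which I expect to be the main point to get right, is $i=m-1$, arising when $n$ is deleted from an interior position $2\leq m\leq n-1$: here the newly adjacent pair is $(\lambda(m-1),\lambda(m+1))$, the one adjacency of $\lambda'$ not inherited from $\lambda$. The crux is the observation that, irrespective of whether this pair is a descent of $\lambda'$, position $m$ was \emph{automatically} a descent of $\lambda$ because $\lambda(m)=n>\lambda(m+1)$; hence by the largest maximal property of $(\lambda,p)$ the letter $\lambda(m+1)=\lambda'(m)$ already lies in the range of $p$. Since $\lambda(m+1)\neq n$, the same reasoning as above shows it persists in the range of $p'$. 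This disposes of the last group, and Theorem~\ref{thm:lmaxchar} then yields that $(\lambda',p')$ is the largest maximal representation of $T'$, completing the proof.
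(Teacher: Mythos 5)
Your proof is correct and takes essentially the same route as the paper: both verify that $(\lambda',p')$ is a legitimate code and then reduce everything to the descent characterization of Theorem~\ref{thm:lmaxchar}, the key point being that any letter smaller than its immediate predecessor in $\lambda'$ is also smaller than its immediate predecessor in $\lambda$ (the paper states this as a single observation about re-inserting the letter $n$, which your three-case split around $m=\lambda^{-1}(n)$ simply makes explicit). In particular, your treatment of the case $i=m-1$ is exactly the paper's parenthetical remark that $\lambda^{-1}(n)$ is a descent of $\lambda$ unless $\lambda^{-1}(n)=n$.
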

\begin{proof}
Clearly $\lambda'$ is a permutation of $\{1,\ldots,n-1\}$, and the
function $p'$ maps $\{1,\ldots,n-1\}$ into $\{2,\ldots,n-1\}\cup
\{\infty\}$ in such a way that $p'(i)>i$ holds for all $i$. We only need
to verify that for every descent $i$ of $\lambda'$, the element
$\lambda'(i+1)$ is in the range of $p'$. This is most easily verified by
visualizing the reduction operation in terms of the arc
representations. In such terms, the reduction
operation removes the letter $n$, and redirects all arrows ending in $n$
to point to $\infty$. If a letter in $\lambda'$ is less than the letter
immediately preceding it, the same remains true even after inserting the
letter $n$. (Note that $\lambda^{-1}(n)$ is a descent unless
$\lambda^{-1}(n)=n$.) Finally the range of $p'$ is obtained from the
range of $p$ by removing $n$ from it (if it was present).     
\end{proof}

\begin{definition}
We say that an alternation acyclic tournament has {\em type $(n,i,j)$}
if it is a tournament on $\{1,\ldots,n\}$, and the parent function $p$ in its
largest maximal representation $(\lambda,p)$ satisfies
$|p^{-1}(\infty)|=i$ and $|p(\{1,\ldots,n\})|=j+1$. We will denote the
number of alternation acyclic tournaments of type $(n,i,j)$ with
$A(n,i,j)$.  
\end{definition}  

Note that $p(n)=\infty$ always holds, so $A(n,i,j)>0$ can only hold when
$i\geq 1$. Similarly, $j\geq 0$ must hold.

\begin{proposition}
\label{prop:altac}
The numbers $A(n,i,j)/j!$ are integers, they are given by the initial
condition 
$A(1,i,j)/j!=\delta_{i,1}\cdot \delta_{0,j}$ (where $\delta_{i,1}\cdot
\delta_{0,j}$ is a product of Kronecker deltas), and the recurrence relation
$$
\frac{A(n,i,j)}{j!}=\sum_{k=i}^{n-1}\binom{k}{i-1}\cdot
\frac{A(n-1,k,j-1)}{(j-1)!}+(j+1)\cdot \frac{A(n-1,i-1,j)}{j!}
\quad\mbox{for $n\geq 2$.} 
$$
\end{proposition}
\begin{proof}
The statement is equivalent to $A(1,i,j)=\delta_{i,1}\cdot \delta_{0,j}$
, and the recurrence relation 
$$
A(n,i,j)=\sum_{k=i}^{n-1}\binom{k}{i-1}\cdot j\cdot
A(n-1,k,j-1)+(j+1)\cdot A(n-1,i-1,j)    \quad\mbox{for $n\geq 2$.}
$$
Suppose we have an alternation acyclic tournament $T$ of type $(n,i,j)$,
and consider its reduction $T'$. We claim that the type of $T'$ must be
either $(n-1,k,j-1)$ for some $k\geq i$ or $(n-1,i-1,j)$. Indeed, if $n$ is in
the range of $p$ then the range of $p'$ has one less element, and 
$p'^{-1}(\infty)=p^{-1}(\infty)-\{n\}\cup p^{-1}(n)$ properly contains
$p^{-1}(\infty)-\{n\}$. If $n$ is not in the range of $p$ then
$p^{-1}(n)=\emptyset$, $p'^{-1}(\infty)=p^{-1}(\infty)-\{n\}$ has
exactly one less element than $p^{-1}(\infty)$, and the range of $p'$
equals the range of $p$.

We claim that any alt-acyclic tournament $T'$ of type $(n-1,k,j-1)$
(where $k\geq i$) is the reduction of exactly $\binom{k}{i-1}\cdot j$
alt-acyclic tournaments of type $(n,i,j)$. Indeed, unless $n$ is
inserted as the last letter of $\lambda$, it creates a descent, so it must be
inserted right before a vertex that is in the range of $p'$. There are
$j$ ways to perform this insertion. Furthermore, we must take a
$(k-i+1)$-element subset of $p'^{-1}(\infty)$ and reassign them to have
$n$ as their parent. A similar, but simpler reasoning shows that for any
alt-acyclic tournament $T'$ of type $(n-1,i-1,j)$ there are exactly
$(j+1)$ alt-acyclic tournaments of type $(n,i,j)$ whose reduction is
$T$. 
\end{proof}

We computed the numbers $A(n,i,j)/j!$ using Maple and the formula given
in Proposition~\ref{prop:altac} for $n\leq 5$. These are given in
Table~\ref{tab:Anij}. 
\begin{table}
\label{tab:altac}  
\begin{tabular}{llll}
$n=2$ &
\begin{tabular}{r|cc}
1 & 1 & \\
0 & 0 & 1\\
\hline
\slashbox{j}{i} & 1 & 2\\  
\end{tabular}
&$n=3$&  
\begin{tabular}{r|ccc}
2 & 1 & &\\
1 & 1 & 4& \\
0 & 0 & 0& 1\\
\hline
\slashbox{j}{i} & 1 & 2& 3\\  
\end{tabular}
\\
\ & \ & \ & \ \\
\\  
$n=4$ &
\begin{tabular}{r|cccc}
3 & 1 &&&\\  
2 & 5 & 11 &&\\
1 & 1 & 5 & 11& \\
0 & 0 & 0& 0 & 1\\
\hline
\slashbox{j}{i} & 1 & 2& 3 & 4\\  
\end{tabular}
& $n=5$&
\begin{tabular}{r|ccccc}
4 & 1 &&&&\\
3 & 16 & 26 &&&\\  
2 & 17 & 58 & 66 && \\
1 & 1 & 6 & 16 & 26 & \\
0 & 0 & 0& 0& 0 & 1\\
\hline
\slashbox{j}{i} & 1 & 2& 3 & 4 & 5\\  
\end{tabular}\\
\end{tabular}
\label{tab:Anij}
\caption{The values of $A(n,i,j)/j!$ for $2\leq n\leq 5$}
\end{table}
A generating function formula for the numbers $A(n,i,j)/j!$ will be
given in Section~\ref{sec:genfun}. Inspecting the tables we can
make several observations, some of which are easy to show.
\begin{proposition}
\label{prop:i+j>n}  
$A(n,i,j)=0$ holds for $i+j>n$.
\end{proposition}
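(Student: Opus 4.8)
The plan is to reduce the statement to a purely combinatorial fact about the parent function $p$ occurring in the largest maximal representation $(\lambda,p)$, with no reference to the tournament structure or to alt-acyclicity at all. First I would unpack the two quantities in the definition of type. The condition $|p^{-1}(\infty)|=i$ says that exactly $i$ of the vertices are roots of the forest, so that there are exactly $n-i$ \emph{non-root} vertices, each of which has a finite parent in $\{2,\ldots,n\}$. The condition $|p(\{1,\ldots,n\})|=j+1$ says that the image of $p$ has $j+1$ elements; since $p(n)=\infty$ always holds, the symbol $\infty$ is one of these elements, and the remaining $j$ elements are the finite values actually taken by $p$, that is, the vertices that are the parent of at least one other vertex. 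I will call these $j$ vertices the \emph{internal} vertices; this is where the offset between $j$ and $j+1$ comes from.

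The key step is to exhibit a surjection from the set of non-root vertices onto the set of internal vertices. Restricting $p$ to the $n-i$ non-root vertices gives a map into $\{2,\ldots,n\}$ whose image is, by the very definition of the image of $p$, precisely the set of $j$ internal vertices. This restricted map is therefore a surjection from a set of size $n-i$ onto a set of size $j$, and such a surjection can exist only when $n-i\ge j$, that is, when $i+j\le n$.

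Consequently, if $i+j>n$ then there is no parent function $p$ on $\{1,\ldots,n\}$ with $|p^{-1}(\infty)|=i$ and $|p(\{1,\ldots,n\})|=j+1$; in particular no pair $(\lambda,p)$ can be the largest maximal representation of a tournament of type $(n,i,j)$, and hence $A(n,i,j)=0$. I do not anticipate a genuine obstacle here: the only point requiring a little care is the bookkeeping that separates the finite part of the range of $p$ from the value $\infty$, together with the observation that $p(n)=\infty$ guarantees $\infty$ really does occur in the range, so that the finite part has exactly $j$ elements. As a sanity check, the bound is tight precisely when the surjection is a bijection, i.e. when every internal vertex has exactly one child and the forest is a disjoint union of paths; this matches the fact that every nonzero entry displayed in Table~\ref{tab:Anij} satisfies $i+j\le n$, with the extreme entries lying on the diagonal $i+j=n$.
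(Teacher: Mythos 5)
Your proof is correct and takes essentially the same approach as the paper: the paper's argument is exactly that having $j+1$ elements in the range of $p$ (one of them being $\infty$, since $p(n)=\infty$) forces at least $j$ elements of $\{1,\ldots,n\}$ to have a parent different from $\infty$, which together with $|p^{-1}(\infty)|=i$ gives $i+j\le n$. Your write-up merely makes the surjection bookkeeping explicit.
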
  
Indeed, for the largest maximal representation $(\lambda,p)$,  to have
$j+1$ elements in the range of $p$, we need at least $j$ elements of
$\{1,\ldots,n\}$ to have a parent different from $\infty$.
\begin{proposition}
\label{prop:ani0}
  $A(n,i,0)=\delta_{i,n}$ where $\delta_{i,n}$ is the Kronecker delta. 
\end{proposition}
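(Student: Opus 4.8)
The plan is to read the claim off directly from the definition of type, combined with the characterization of largest maximal representations in Theorem~\ref{thm:lmaxchar}. The crucial observation is that the parameter $j$ records the number of \emph{finite} values in the range of $p$: the value $\infty$ always lies in the range (since $p(n)=\infty$), so the condition $|p(\{1,\ldots,n\})|=j+1$ with $j=0$ forces the range of $p$ to equal exactly $\{\infty\}$. In other words, $j=0$ is equivalent to $p(i)=\infty$ for every $i\in\{1,\ldots,n\}$.

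First I would dispose of the case $i\neq n$. When $p$ is identically $\infty$ we have $p^{-1}(\infty)=\{1,\ldots,n\}$, so $|p^{-1}(\infty)|=n$. Hence a tournament of type $(n,i,0)$ can exist only when $i=n$, which already gives $A(n,i,0)=0$ for every $i\neq n$.

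It then remains to show $A(n,n,0)=1$. Here I would invoke Theorem~\ref{thm:lmaxchar}: a pair $(\lambda,p)$ is the largest maximal representation of the tournament it induces precisely when, for each descent $i$ of $\lambda$, the vertex $\lambda(i+1)$ lies in the range of $p$. Since $p\equiv\infty$ means the range of $p$ contains no element of $\{1,\ldots,n\}$, the permutation $\lambda$ can have no descent whatsoever; equivalently $\lambda$ must be the identity permutation $12\cdots n$. Thus exactly one pair $(\lambda,p)$ of type $(n,n,0)$ exists, and by Theorem~\ref{thm:lmaxchar} it is a genuine largest maximal representation, inducing the alt-acyclic tournament all of whose edges are descents (which trivially contains no alternating cycle). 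Therefore $A(n,n,0)=1$, and the proof is complete.

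There is essentially no obstacle in this argument, as it follows immediately from the definitions once $j=0$ is correctly interpreted as $p\equiv\infty$. The only point requiring a moment's care is confirming that the unique candidate, the identity $\lambda$ together with $p\equiv\infty$, really is a valid largest maximal representation; this is settled at once by Theorem~\ref{thm:lmaxchar}, since the defining condition is vacuous in the absence of descents.
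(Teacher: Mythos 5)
Your proof is correct and takes essentially the same approach as the paper, whose one-line justification is precisely your key observation: since $p(n)=\infty$ always lies in the range, $j=0$ forces the range of $p$ to be exactly $\{\infty\}$, so $p\equiv\infty$ and hence $i=n$. You additionally spell out why $A(n,n,0)=1$ (the descent condition of Theorem~\ref{thm:lmaxchar} forces $\lambda$ to be the identity, and that unique pair is vacuously a valid largest maximal representation), a step the paper leaves implicit.
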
  
Indeed, when the range of $p$ is $\{\infty\}$ then all elements have
$\infty$ as their parent. 
It is only a little harder to show that in the main diagonal of each
table we have the {\em Eulerian} numbers.
\begin{proposition}
The number $A(n,n-j,j)/j!$ is the number of permutations of
$\{1,\ldots,n\}$ having exactly $j$ descents.
\end{proposition}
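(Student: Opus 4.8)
The plan is to prove the identity by showing that the normalized numbers $B(n,j) = A(n,n-j,j)/j!$ satisfy the same recurrence and initial condition as the Eulerian numbers $E(n,j)$, which count the permutations of $\{1,\ldots,n\}$ with exactly $j$ descents. Recall that the Eulerian numbers are determined by $E(1,0)=1$, $E(1,j)=0$ for $j\geq 1$, and the recurrence $E(n,j)=(j+1)\cdot E(n-1,j)+(n-j)\cdot E(n-1,j-1)$ for $n\geq 2$, which is obtained by inserting the largest letter into each permutation of $\{1,\ldots,n-1\}$ and tracking the effect on the descent count.

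First I would specialize the recurrence of Corollary~\ref{cor:altac} to the main diagonal $i=n-j$. The second summand becomes $(j+1)\cdot A(n-1,n-j-1,j)/j!$; since $(n-j-1)+j=n-1$, this quantity lies again on the main diagonal for $n-1$, so it equals $(j+1)\cdot B(n-1,j)$. The crucial simplification occurs in the sum $\sum_{k=n-j}^{n-1}\binom{k}{n-j-1}\cdot A(n-1,k,j-1)/(j-1)!$. Here I invoke Proposition~\ref{prop:i+j>n}: the factor $A(n-1,k,j-1)$ vanishes unless $k+(j-1)\leq n-1$, that is, unless $k\leq n-j$. Combined with the summation range $k\geq n-j$, only the index $k=n-j$ survives, and there $\binom{n-j}{n-j-1}=n-j$. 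The surviving term is $(n-j)\cdot A(n-1,n-j,j-1)/(j-1)!$, and since $(n-j)+(j-1)=n-1$ this is $(n-j)\cdot B(n-1,j-1)$.

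Assembling the two pieces yields $B(n,j)=(n-j)\cdot B(n-1,j-1)+(j+1)\cdot B(n-1,j)$, which is precisely the Eulerian recurrence. It then remains only to match the base case: from $A(1,i,j)=\delta_{i,1}\cdot\delta_{0,j}$ one reads off $B(1,0)=A(1,1,0)=1$ and $B(1,j)=0$ for $j\geq 1$, in agreement with $E(1,j)$. By induction on $n$ the equality $B(n,j)=E(n,j)$ follows for all $n$ and $j$, which is the assertion.

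The argument is essentially bookkeeping, so there is no deep obstacle; the one step requiring genuine care is the collapse of the sum, where one must correctly apply the vanishing criterion of Proposition~\ref{prop:i+j>n} and verify that $k=n-j$ is the unique index compatible with both the summation range and the support of $A(n-1,\,\cdot\,,j-1)$, so that the stray binomial coefficient conveniently reduces to $n-j$. As a sanity check one can confirm that the main diagonals of Table~\ref{tab:Anij} read $1$; $1,1$; $1,4,1$; $1,11,11,1$; $1,26,66,26,1$, exactly the Eulerian triangle. I note in passing that a direct combinatorial route is also available: on the diagonal $i+j=n$ the equality of the number of non-root elements and the number of finite parent values forces the parent function to be injective on non-roots, so the associated biordered forest is a disjoint union of chains; one could then try to recover the descent statistic of $\lambda$ from this chain structure. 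The recurrence comparison is cleaner, however, and sidesteps the bookkeeping of the $j!$ normalization.
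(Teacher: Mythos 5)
Your proof is correct and follows essentially the same route as the paper: specialize the recurrence of Corollary~\ref{cor:altac} to $i=n-j$, use Proposition~\ref{prop:i+j>n} to collapse the sum to the single term $k=n-j$ with $\binom{n-j}{n-j-1}=n-j$, and match the resulting recurrence and initial conditions with those of the Eulerian numbers. Your write-up is in fact somewhat more explicit about the collapse of the sum than the paper's own (quite terse) argument.
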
  
\begin{proof}
  Because of Proposition~\ref{prop:i+j>n}, when we set $i=n-j$ in the
  recurrence given in Proposition~\ref{prop:altac}, only the term
  associated to $k=n-j$ will have a positive contribution. By
  $\binom{n-j}{n-j-1}=n-j$ we get  
$$
\frac{A(n,n-j,j)}{j!}=(n-j)\cdot
\frac{A(n-1,k,j-1)}{(j-1)!}+(j+1)\cdot \frac{A(n-1,n-j-1,j)}{j!}
$$
for $n\geq2$. This is exactly the recurrence for the Eulerian numbers,
and the initial conditions match. 
\end{proof}

It may be a little harder to notice that the numbers in
the first column multiplied by the factorial of the row index add up to
the Genocchi numbers of the first kind, that is,
\begin{equation}
  \label{eq:gen1}
|G_{2n}|=\sum_{j=0}^{n-1} A(n,1,j).
\end{equation}  
We will prove a generalization of Equation~(\ref{eq:gen1}) in
Section~\ref{sec:refined}. 

\section{Refined counting of alternation acyclic tournaments}
\label{sec:refined}

The main result of this section is the following generalization of
Dumont's theorem, which also refines Theorem~\ref{thm:all-alta}. 
\begin{theorem}
\label{thm:Dumontfine}
For each $i\in \{1,\ldots,n\}$, the sum $\sum_{j=0}^n A(n,i,j)$
is the number of ordered pairs 
$$((a_1,\ldots,a_{n-1}), (b_1,\ldots,b_{n-1}))\in {\mathbb Z}^{n-1}\times
{\mathbb Z}^{n-1}$$ satisfying the following conditions:
\begin{enumerate}
\itemsep=-1pt  
\item $0\leq a_k\leq k$ and $1\leq b_k\leq k$ hold for all $k\in
  \{1,\ldots,n-1\}$;
\item the set $\{a_1,b_1,\ldots,a_{n-1},b_{n-1}\}$  contains
  $\{1,\ldots,n-1\}$;
\item $|\{k\in \{1,\ldots,n-1\} \::\: a_k=0\}|=i-1$.  
\end{enumerate}  
\end{theorem}

\begin{remark}
\label{rem:Dumontfine}  
{\em   
Theorem~\ref{thm:Dumontfine} above is a direct generalization of
Corollary~\ref{cor:dumont}, a {\em restated} variant of Dumont's
original Theorem~\ref{thm:dumont}. This generalization has a shorter
proof. A similar but longer argument would allow generalizing
Theorem~\ref{thm:dumont} to stating that 
$\sum_{j=0}^n A(n,i,j)$ equals the number of excedant functions
$f:\{1,2,\ldots,2n-1\}\rightarrow \{1,2,\ldots,2n-1\}$ satisfying the
following conditions:
\begin{enumerate}
\item $f(2k)\leq 2n-2$ holds for $k=1,\ldots,n-1$; 
\item $f(\{1,2,\ldots,2n-1\})=\{2,4,\ldots,2n-2\}\cup \{2n-1\}$;
\item $|f^{-1}(\{2n-1\})|=i$.
\end{enumerate}
 } 
\end{remark}

The key ingredient to proving Theorem~\ref{thm:Dumontfine} is the
following bijection. 

\begin{theorem}
\label{thm:descbij}
There is a bijection between the set of all permutations $\pi$ of
$\{1,\ldots,n\}$ and the set of excedant functions $f:
\{1,\ldots,n\}\rightarrow \{1,\ldots,n\}$ such that each permutation $\pi$ and
the corresponding excedant function $f$ have the following property:
$$
\{1,\ldots,n\}-\{f(1),\ldots,f(n)\}=\{\pi(i+1) \::\: 1\leq i\leq n-1\quad
\mbox{and}\quad \pi(i)>\pi(i+1)\}. 
$$
\end{theorem}  
\begin{proof}
We will describe our bijection using the process of inserting the
numbers $1,2,\ldots,n$ into the permutation $\pi$ in decreasing order. In
order to reduce the number of cases, we place before the first
number $\pi(1)$ the number $\pi(0):=0$ and after the last number
$\pi(n)$ the number $\pi(n+1)=n+1$. Thus every number is inserted
between two numbers. For example, for $n=6$ and the permutation
$\pi(1)\cdots\pi(6)=615342$ we have the insertion process
$$
07\rightarrow 067 \rightarrow 0657 \rightarrow 
06547 \rightarrow 065347 \rightarrow 0653427 \rightarrow 06153427.
$$
The number $f(i)$ is computed in step $n+1-i$ when we insert $i$ into the
permutation between the numbers $u$ and $v$, using the following rule:
\begin{equation}
\label{eq:fdef}  
f(i):=
\begin{cases}
v & \mbox{if $u>v$;}\\
\overleftarrow{u} & \mbox{if $0<u<v$};\\
i & \mbox{if $u=0$.}\\
\end{cases}  
\end{equation}
Here $\overleftarrow{u}$ is the leftmost number $w$ in the current word
such that the consecutive subword $w\cdots u$ is {\em decreasing}, that is, each
number in it is smaller than the immediately preceding number. (We
have $\overleftarrow{u}=u$ exactly when $u$ is immediately preceded by a smaller
number.) In our example we have $(f(1),\ldots, f(6))=(5,4,4,6,6,6)$. In
the third step, when we inserted $4$ between $5$ and $7$, we set
$f(4)=\overleftarrow{5}=6$, in the fifth step, when we inserted $2$
between $4$ and $7$, we set $f(2)=\overleftarrow{4}=4$. In the last
step, when we inserted $1$ between $6$ and $5$, we set $f(1)=5$.

The operation $\pi\mapsto f$ is
well-defined. The numbers $f(i)$ clearly satisfy $i\leq f(i)\leq
n$. Since the number of all words $\pi(1)\cdots\pi(n)$ is the same as
the number of all excedant functions $f:\{1,\ldots,n\} \rightarrow
\{1,\ldots,n\}$, to show that we defined a bijection, it suffices to
show that our  assignment is injective: there is at most one way to
reconstruct a permutation from an excedant function $f$.

We always have $f(n)=n$ and the first step is to insert $n$ between $0$
and $n+1$, the last line of the definition (\ref{eq:fdef}) is
applicable. Assume, by induction, that there is only one way to
reconstruct the insertion of $n$, $n-1$, \ldots, $i+1$, based on the
knowledge of $f(n)$, $f(n-1)$, \ldots, $f(i+1)$.  Consider $f(i)$
satisfying $i\leq f(i)\leq n$, and let us show that there is only
one way to insert $i$ that yields the given value of $f(i)$. Only
the last line of the definition (\ref{eq:fdef}) allows setting
$f(i)=i$, the value of $f(i)$ is greater than $i$ on the other
two lines. Thus, in the case when $f(i)=i$, we must insert $i$
right after $0$ as the first new number in our permutation. From now on
we may assume that $f(i)=w$ for some $w>i$. Let $w'$ be the
immediate predecessor of $w$ in our current word. We distinguish two
cases depending on how $w'$ and $w$ compare. If $w'>w$ then $i$
can not be inserted anywhere after $w$, since the only way to obtain
$f(i)=w$ would be to insert $i$ between some $u$ and $v$
satisfying $u<v$ and $w=\overleftarrow{u}$. This is impossible: if
$w\cdots u$ is a decreasing subword, then so is $w'w\cdots u$ and so
$\overleftarrow{u}$ is either $w'$ or an even earlier number. Thus $i$
must be inserted somewhere before $w$, and the only way to get
$f(i)=w$ when $w$ is a number to the right of the place of insertion
is to insert $i$ right before $w$. We are left with the case when
$w'<w$. If we insert $i$ anywhere before $w$, we can not get
$f(i)=w$, only $w'$ or a number to the left of it. We must therefore
insert $i$ after $w$ in such a way that the second line of
(\ref{eq:fdef}) can be used and it yields $f(i)=w$. We must find a $u$
such that the $v$ succeeding $u$ is larger than $u$ and the subword
$w\cdots u$ is decreasing. In other words, we must take the rightmost $u$
such that $w\cdots u$ is a decreasing consecutive subword.

We are left to show that the set $\{f(1),\ldots,f(n)\}$ contains all
numbers between $1$ and $n$ except those values that are immediately
preceded by a larger number in the permutation $\pi(1)\cdots \pi(n)$. We
prove the following generalization of this statement by induction: at
step $n+1-i$ of the insertion process, the set $\{f(i),\ldots, f(n)\}$
contains all elements of the set $\{i,\ldots,n\}$ except those
numbers, which are immediately preceded by a larger number in the
current permutation of $n, n-1, \ldots, i$. At
the first step $n$ is inserted and it is preceded by a smaller
number. We set $f(n)=n$. Assume the statement is true up to step $n-i$
and consider the insertion of $i$. If $i$ is inserted right after
$0$, the current set of numbers immediately preceded by a smaller number
does not change, and $f(i)=i$ is added to the set
$\{f(i+1), \ldots, f(n)\}$. In all other cases $i$ is inserted right after
a larger number and $i\not \in \{f(i),\ldots,f(n)\}$. If $i$ is
inserted between $u$ and $v$ satisfying $u>v$, then $v$ which was
hitherto immediately preceded by a larger number, it is now immediately
preceded by the smaller number $i$. The set of numbers immediately
preceded by a larger number gains $i$ as a new element and loses $v$
as an element, no other change occurs. This change is properly reflected
in setting $f(i)=v$. Finally, if $u<v$ holds, then the only change to
the set of numbers immediately preceded by a larger number is the
addition of $i$ to this set. This is properly handled, if we select
$f(i)$ to be a number that is already present in the set
$\{f(i+1),\ldots,f(n)\}$. Selecting $f(i)=\overleftarrow{u}$ fits the
bill, as $\overleftarrow{u}$ can not be immediately preceded by a larger
number.
\end{proof}

\begin{definition}
We call the excedant function $f:\{1,\ldots,n\}\rightarrow
\{1,\ldots,n\}$ associated to the permutation $\pi$ by the algorithm
described in the proof of Theorem~\ref{thm:descbij} the {\em
  descent-sensitive code} of the permutation $\pi$. 
\end{definition}  

{\em Proof of Theorem~\ref{thm:Dumontfine}:} 
Consider the largest maximal representation $(\lambda,p)$ of an
alternation acyclic 
tournament and let us replace $\lambda$ with its descent-sensitive code
$f:\{1,\ldots,n\}\rightarrow \{1,\ldots,n\}$. Note that we must have 
$\lambda(1)=1$ for the largest maximal order of each alt-acyclic
tournament and this equality is equivalent to $f(1)=1$. Hence the
function $f$ is completely defined by its restriction $\widetilde{f}$ to the set
$\{2,\ldots,n\}$, which sends this set into itself. Similarly, we must
have $p(n)=\infty$, thus the restriction of $p$ to $\{1,\ldots,n-1\}$,
which is a function $\widetilde{p}: \{1,\ldots,n-1\}\rightarrow
\{2,\ldots,n\}\cup\{\infty\}$, completely determines $p$.
Let us define the vectors $(a_1,\ldots,a_{n-1})$ and $(b_1,\ldots,b_{n-1})$
by setting  
$$
a_{k}=
\begin{cases}
  n+1-p(n-k)& \mbox{if $p(n-k)\neq \infty$,}\\
  0 & \mbox{if $p(n-k)= \infty$}\\
\end{cases}  
$$
and $b_k=n+1-f(n+1-k)$ for $k=1,\ldots,n-1$.
The condition $p(n-k)\in \{n-k+1,\ldots,n\}\cup\{\infty\}$ is equivalent to
$0\leq a_k\leq k$ and the condition $n+1-k\leq f(n+1-k)\leq n$ is
equivalent to $1\leq b_k\leq k$. The description given in
Proposition~\ref{prop:lmaxchar} may be restated as follows: the pair of
functions $(f,p)$ comes from a largest maximal code $(\lambda, p)$ if
and only if we have
\begin{equation}
\label{eq:pf}  
\{p(1),\ldots,p(n-1)\}\cup \{f(2),\ldots, f(n)\}\supseteq \{2,\ldots,n\}. 
\end{equation}
This is equivalent to Condition (2) in our statement. Finally, $|\{k\in
\{1,\ldots,n-1\} \::\: a_k=0\}|$ is clearly the number of elements sent
into $\infty$ by $p$.   
\qed

An important special instance of Theorem~\ref{thm:Dumontfine} is the
case $i=1$. In this case all $a_k>0$ is satisfied for all $k$ and the
pairs $((a_1,\ldots,a_{n-1}), (b_1,\ldots,b_{n-1}))$ are exactly the
ones that are counted in
Corollary~\ref{cor:dumont}. Equation~(\ref{eq:gen1}) follows.  

\section{New combinatorial models for the Genocchi numbers}
\label{sec:genmod}
 
Equation~(\ref{eq:gen1}) inspires introducing {\em ascending alternation acyclic
  tournaments}.  

\begin{definition}
We call an alternation acyclic tournament $T$ on $\{1,\ldots,n\}$
{\em ascending} if every $i<n$ is the tail of an ascent, that is, for
each $i<n$ there is a $j>i$ such that $i\rightarrow j$. 
\end{definition}  

\begin{lemma}
\label{lem:asc}  
An alternating acyclic tournament $T$ on $\{1,\ldots,n\}$ is ascending
if and only if it has type $(n,1,j)$ for some $j$. 
\end{lemma}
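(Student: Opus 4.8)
The plan is to reduce both sides of the stated equivalence to a single condition on the set $p^{-1}(\infty)$ attached to the largest maximal representation $(\lambda,p)$ of $T$. The definition of ``ascending'' is a statement about which vertices are tails of ascents, and the definition of type $(n,i,j)$ records $i=|p^{-1}(\infty)|$; so it suffices to show that these two quantities measure the same thing.

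First I would recall, from the construction of $p$ in the proof of Theorem~\ref{thm:alta-arc}, that for any linear extension $\pi$ of the right-alternating walk order---and in particular for the largest maximal order $\lambda$, which is such a linear extension---the unique parent function making $(\pi,p)$ induce $T$ is obtained by letting $p(u)$ be the leftmost vertex $v$ in $\pi$ with $u\asc v$, and setting $p(u)=\infty$ exactly when no ascent begins at $u$. Consequently $p^{-1}(\infty)$ is precisely the set of vertices of $T$ that are not the tail of any ascent; note in passing that this set depends only on $T$, not on the chosen representation.

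The equivalence is then almost immediate. Since there is no $j>n$, the vertex $n$ is never the tail of an ascent, so $n\in p^{-1}(\infty)$ always (this is the observation $p(n)=\infty$ recorded before Theorem~\ref{thm:altac}). By definition $T$ is ascending exactly when every $i<n$ is the tail of an ascent, i.e.\ exactly when no vertex other than $n$ lies in $p^{-1}(\infty)$. Combined with $n\in p^{-1}(\infty)$, this is equivalent to $p^{-1}(\infty)=\{n\}$, hence to $|p^{-1}(\infty)|=1$, which is exactly the condition $i=1$ in the definition of type. The second index is then simply whatever $j=|p(\{1,\ldots,n\})|-1$ happens to be, so $T$ has type $(n,1,j)$ for some $j$; running the chain of equivalences backwards yields the converse.

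There is essentially no serious obstacle here: the only point that must be stated with care is that the parent function of the \emph{largest maximal} representation enjoys the ``$p(u)=\infty$ iff $u$ begins no ascent'' property. This is not special to the largest maximal order---it holds for the $p$ associated to any linear extension of $\rawalk$, by the uniqueness argument in Theorem~\ref{thm:alta-arc}---and the largest maximal order is one such extension. Once this is in hand, the lemma follows purely from bookkeeping of which vertices fail to begin an ascent.
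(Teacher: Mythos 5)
Your proof is correct and follows essentially the same route as the paper: both rest on the observation that in any code $(\pi,p)$ inducing $T$ (in particular the largest maximal representation), $p(u)=\infty$ holds exactly when $u$ begins no ascent, so that ``ascending'' is equivalent to $p^{-1}(\infty)=\{n\}$, i.e.\ to $i=1$ in the type. The only cosmetic difference is that you derive this key property from the uniqueness construction in Theorem~\ref{thm:alta-arc}, while the paper reads it off directly from the definition of the tournament induced by a biordered forest.
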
  
Indeed, for any biordered forest  inducing $T$, if $(\pi,p)$ is the code
of the biordered forest, $p(i)=\infty$ holds if and only if $i$ is not the
tail of any ascent. An alt-acyclic tournament is ascending if and only
if $n$ is the only element of $\{1,\ldots,n\}$ whose parent is $\infty$. 

Because of Lemma~\ref{lem:asc}, Equation~(\ref{eq:gen1}) may be
rephrased as follows.

\begin{corollary}
  \label{cor:asc}
The number of ascending alternation acyclic tournaments on
$\{1,\ldots,n\}$ is the unsigned Genocchi number of the first kind $|G_{2n}|$.  
\end{corollary}  

Taking into account Theorem~\ref{thm:all-alta},
Theorem~\ref{thm:Dumontfine} implies the following result on the median
Genocchi numbers. 
\begin{corollary}
  \label{cor:Dumont2}
The median Genocchi number $H_{2n-1}$ is the total number of all ordered
pairs 
$$((a_1,\ldots,a_{n-1}), (b_1,\ldots,b_{n-1}))\in {\mathbb Z}^{n-1}\times
{\mathbb Z}^{n-1}$$
such that $0\leq a_k\leq k$ and $1\leq b_k\leq k$ hold for all $k$
and the set $\{a_1,b_1,\ldots,a_{n-1},b_{n-1}\}$ contains $\{1,\ldots,n-1\}$.
\end{corollary}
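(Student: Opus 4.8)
The plan is to obtain this statement by summing the refined count of Proposition~\ref{prop:Dumontfine} over all values of $i$ and then invoking Theorem~\ref{thm:all-alta}. The starting observation is that conditions (1) and (2) of the Corollary are verbatim conditions (1) and (2) of Proposition~\ref{prop:Dumontfine}, while the Corollary simply omits the third condition $|\{k : a_k=0\}|=i$. Thus the objects counted by the Corollary are exactly the ordered pairs $((a_1,\ldots,a_{n-1}),(b_1,\ldots,b_{n-1}))$ counted by the Proposition, but now gathered together over all admissible $i$ instead of being sorted according to the number of vanishing entries among the $a_k$.

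With this in mind I would set up the double count as follows. Proposition~\ref{prop:Dumontfine} gives, for each fixed $i$, an identity $\sum_{j=0}^{n} A(n,i,j)=N_i$, where $N_i$ is the number of pairs satisfying conditions (1) and (2) together with $|\{k : a_k=0\}|=i$. Summing over $i$, the left-hand side becomes $\sum_{i}\sum_{j} A(n,i,j)$, which counts every alternation acyclic tournament on $\{1,\ldots,n\}$ exactly once, since each such tournament has a unique largest maximal representation and hence a unique type; by Theorem~\ref{thm:all-alta} this total equals $H_{2n-1}$. On the right-hand side, for any pair obeying (1) and (2) the quantity $|\{k : a_k=0\}|$ is a well-defined statistic taking a single value, so the conditions indexed by $i$ partition the set of all pairs satisfying (1) and (2). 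Consequently $\sum_i N_i$ is precisely the number of pairs satisfying (1) and (2) with no further restriction, and equating the two sides yields the Corollary.

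There is no genuine obstacle here: the content is entirely contained in Proposition~\ref{prop:Dumontfine} and Theorem~\ref{thm:all-alta}, and the Corollary is their formal consequence. The only point deserving a line of justification is that summing over $i$ on the combinatorial side neither omits nor double-counts any pair; this is immediate, because every admissible pair contributes to exactly one $N_i$, namely the one indexed by its number of zero entries $a_k$. I would close by noting that this yields a pair-formulation of $H_{2n-1}$ directly parallel to the excedant-function description in Corollary~\ref{cor:Dumont}.
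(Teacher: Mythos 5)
Your proof is correct and is essentially the paper's own argument: the paper presents Corollary~\ref{cor:Dumont2} as an immediate consequence of Theorem~\ref{thm:all-alta} and Proposition~\ref{prop:Dumontfine}, which is precisely your summation over $i$ together with the observation that the zero-count statistic partitions the pairs satisfying conditions (1) and (2). One detail worth flagging (an issue in the paper's statement, not in your reasoning): since $p(n)=\infty$ always holds, the type parameter $i$ actually equals $1+|\{k : a_k=0\}|$ rather than $|\{k : a_k=0\}|$, but this off-by-one is immaterial for the corollary because summing over all $i$ erases the third condition either way.
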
  
Corollary~\ref{cor:Dumont2} makes the divisibility of $H_{2n-1}$ by
$2^{n-1}$ especially transparent. Furthermore, it inspires the
following model for the normalized median Genocchi numbers. 
\begin{theorem}
\label{thm:nmGenocchi}  
The normalized median Genocchi number $h_n$ is the number of sequences
$\{u_1,v_1\}, \{u_2,v_2\}, \ldots, \{u_n,v_n\}$ subject to the following
  conditions:
\begin{enumerate}
\item the set $\{u_k,v_k\}$ is a (one- or two-element) subset of
  $\{1,\ldots,k\}$;
\item the set $\{u_1,v_1,u_2,v_2,\ldots,u_n,v_n\}$ equals $\{1,\ldots,n\}$.  
\end{enumerate}    
\end{theorem}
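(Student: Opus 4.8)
The plan is to deduce the statement from Corollary~\ref{cor:Dumont2} by exhibiting a free action of ${\mathbb Z}_2^n$ on the set it enumerates, so that the orbits are exactly the sequences in the theorem. Applying Corollary~\ref{cor:Dumont2} with $n+1$ in place of $n$, the median Genocchi number $H_{2n+1}$ is the number of ordered pairs $((a_1,\ldots,a_n),(b_1,\ldots,b_n))$ of integer vectors satisfying $0\leq a_i\leq i$ and $1\leq b_i\leq i$ for all $i$, whose combined set of entries $\{a_1,b_1,\ldots,a_n,b_n\}$ contains $\{1,\ldots,n\}$. Since every entry lies in $\{0,1,\ldots,n\}$ and $0\notin\{1,\ldots,n\}$, this last condition says precisely that the nonzero entries of the two vectors, taken together, exhaust $\{1,\ldots,n\}$. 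I denote this set of admissible pairs by $S$, so that $|S|=H_{2n+1}$.

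Next I would define, for each $i\in\{1,\ldots,n\}$, an involution $\tau_i$ acting only on the $i$-th coordinate pair $(a_i,b_i)$ and fixing every other coordinate: if $a_i=0$ put $\tau_i\colon(0,b_i)\mapsto(b_i,b_i)$; if $a_i=b_i$ (which forces $a_i\geq 1$, as $b_i\geq 1$) put $\tau_i\colon(b_i,b_i)\mapsto(0,b_i)$; and if $a_i\neq b_i$ with $a_i\geq 1$ put $\tau_i\colon(a_i,b_i)\mapsto(b_i,a_i)$. These three clauses partition all admissible $(a_i,b_i)$, each image again satisfies $0\leq a_i\leq i$ and $1\leq b_i\leq i$, and one checks at once that $\tau_i$ is an involution with no fixed point: the first two clauses pair $(0,w)$ with $(w,w)$, and the third genuinely swaps $a_i$ and $b_i$ when they differ. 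The crucial property is that $\tau_i$ preserves the \emph{set} $\{a_i,b_i\}\setminus\{0\}$ of nonzero values in position $i$: the first two clauses both yield the singleton $\{b_i\}$, and the third permutes the two-element set $\{a_i,b_i\}$. Hence each $\tau_i$ preserves the combined nonzero value set, and therefore membership in $S$; as the $\tau_i$ act on disjoint coordinates they commute, so together they generate a free ${\mathbb Z}_2^n$-action on $S$ in which every orbit has exactly $2^n$ elements.

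It remains to identify the orbits with the sequences in the statement. To an orbit I associate, for each $i$, the nonzero value set in position $i$, recorded as a one- or two-element set $\{u_i,v_i\}\subseteq\{1,\ldots,i\}$; by the previous paragraph this is an orbit invariant, and conversely it determines the orbit, since within a fixed nonzero value set the admissible coordinate pairs are exactly the two orbit elements (namely $(0,w)$ and $(w,w)$ for a singleton $\{w\}$, and $(w,w')$ and $(w',w)$ for a two-element set $\{w,w'\}$). This yields a bijection between orbits and sequences $\{u_1,v_1\},\ldots,\{u_n,v_n\}$ with $\{u_i,v_i\}\subseteq\{1,\ldots,i\}$, which is condition~(1). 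Under this correspondence the defining condition of $S$ — that the combined nonzero values exhaust $\{1,\ldots,n\}$ — translates exactly into $\{u_1,v_1,\ldots,u_n,v_n\}=\{1,\ldots,n\}$, which is condition~(2). Therefore the number of such sequences equals the number of orbits, namely $H_{2n+1}/2^n=h_n$.

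The only delicate point, and the part I would write out most carefully, is the bookkeeping that makes each $\tau_i$ simultaneously free and value-set preserving. A naive swap $a_i\leftrightarrow b_i$ fails on both counts: it fixes the diagonal $a_i=b_i$, and it is not even well defined, since $a_i$ may equal $0$ while $b_i$ may not. Pairing the diagonal entry $(w,w)$ with the ``slack'' entry $(0,w)$ repairs both defects at once and is the heart of the argument; everything else is a routine check of the coordinate ranges and of the involution property. This is also exactly the construction that renders the divisibility $2^n\mid H_{2n+1}$ transparent, as advertised in the introduction.
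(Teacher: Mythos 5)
Your proof is correct and follows essentially the same route as the paper: both deduce the statement from Corollary~\ref{cor:Dumont2} via a free $\mathbb{Z}_2^n$-action whose involutions swap $(a_k,b_k)\mapsto(b_k,a_k)$ off the diagonal and pair $(0,b_k)$ with $(b_k,b_k)$ otherwise, then identify orbits with the sequences of nonzero value sets. Your write-up is in fact slightly more careful than the paper's on the freeness point (the paper only remarks that the involutions act on different coordinates, while you also verify each $\tau_i$ is fixed-point free).
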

\begin{proof}  
By Corollary~\ref{cor:Dumont2}, the median Genocchi number is the number
of pairs of vectors $((a_1,\ldots,a_n), (b_1,\ldots,b_n))$ such that
$0\leq a_k\leq k$ and $1\leq b_k\leq k$ hold for all $k$ and the set
$\{a_1,b_1,\ldots,a_{n},b_{n}\}$ contains $\{1,\ldots,n\}$. Let us
first define a ${\mathbb Z}_2^n$-action of the set of all such
vectors. We define the involution $\phi_k$ for $k\in \{1,\ldots,n\}$ as
follows. The map $\phi_k$ sends
$$
((a_1,\ldots,a_n),
(b_1,\ldots,b_n))\quad\mbox{into}
$$
$$
((a_1,\ldots,a_{k-1},a_k',a_{k+1},\ldots,a_n),(b_1,\ldots,b_{k-1},b_k',b_{k+1},\ldots,b_n)) 
$$
where
$$
(a_k',b_k')=  
\begin{cases}
  (b_k,a_k) & \mbox{if $a_k\neq b_k$ and $a_k\neq 0$;} \\
  (0,b_k) & \mbox{if $a_k=b_k$;} \\
  (b_k,b_k) & \mbox{if $a_k=0$.} \\  
\end{cases}  
$$
In other words, the map $\phi_k$ changes only the $k$-th coordinates of
$(a_1,\ldots,a_n)$ and $(b_1,\ldots,b_n)$, it swaps $a_k$ and $b_k$ if
$\{a_k,b_k\}$ is a two element subset of $\{1,\ldots,k\}$ and it swaps
the pair $(b_k,b_k)$ with the pair $(0,b_k)$. Note that in this second case, we
have 
$$
\{a_k,b_k\}\cap \{1,\ldots,k\}=\{b_k\}
$$
for $a_k=0$, as well as for $a_k=b_k$. The action of the involutions
$\phi_k$ is free, as they act on different coordinates. An orbit
representative for this action is the sequence of sets
$$\{a_1,b_1\}\cap\{1\}, \{a_2,b_2\}\cap \{1,2\},\ldots, \{a_n,b_n\}\cap
\{1,\ldots,n\}.$$
In the case when $a_k\neq 0$ we may
set $u_k=a_k$ and $v_k=b_k$, and in the case when $a_k=0$, we may set
$u_k=b_k$ and $v_k=b_k$.
This orbit representative is valid if and only if the set
$\{u_1,v_1,u_2,v_2,\ldots,u_n,v_n\}$ equals $\{1,\ldots,n\}$.   
\end{proof}
\begin{remark}
{\em It was recently shown by A.\ Bigeni~\cite{Bigeni-bij} that the
  above model is bijectively equivalent to the model introduced by
  Feigin~\cite{Feigin-deg}. The bijection is highly nontrivial,
  A.\ Bigeni's entire paper is devoted to it. It is through Feigin's
  model that the above model is related to the earlier models by 
Kreweras~\cite{Kreweras} and by Kreweras and
Barraud~\cite{Kreweras-Barraud}. All earlier models are related to the
Kreweras triangle~\cite{Kreweras}, and there is an interpretation of the
numbers in the Kreweras triangle through Bigeni's bijection. 
Note, however, that the numbers $A(n,i,j)$
introduced in this paper form a three dimensional array, and they are
{\em not directly related} to the Kreweras triangle.}

\end{remark}  

\begin{remark}
  \label{rem:Dumont}
{\em The variant of Theorem~\ref{thm:Dumontfine}, together with
Theorem~\ref{thm:all-alta} imply the following variant of
Corollary~\ref{cor:Dumont2}: the median Genocchi number $H_{2n-1}$ is
the number of excedant functions
$f:\{1,2,\ldots,2n-1\}\rightarrow \{1,2,\ldots,2n-1\}$ satisfying
$f(2k)\leq 2n-2$ for $k=1,\ldots,n-1$ and 
$$
f(\{1,2,\ldots,2n-1\})=\{2,4,\ldots,2n-2\}\cup
\{2n-1\}.
$$}
\end{remark}

\section{Generating function formulas}
\label{sec:genfun}

In this section we prove a generating function formula for the numbers
$A(n,i,j)$ introduced in Section~\ref{sec:lmax} and obtain the ordinary
generating functions of the Genocchi numbers of both kinds. 

We begin with introducing the generating function
$$\alpha(x,y,t)=\sum_{n=0}^\infty \sum_{i=1}^n \sum_{j=0}^{n-1}
\frac{A(n,i,j)}{j!} x^i y^j t^n,$$
in which we denote the coefficient of $t^n$ by $\alpha_n(x,y)$. 
Proposition~\ref{prop:altac} may be rewritten as
\begin{equation}
  \label{eq:ainit}
  \alpha_1(x,y)=x \quad\mbox{and}
\end{equation}
\begin{equation}
  \label{eq:arec}
  \alpha_{n+1}(x,y)
  =xy (\alpha_n(x+1,y)-\alpha_n(x,y))+x\alpha_n(x,y)
  +xy \frac{\partial}{\partial y}\alpha_n(x,y)
  \quad\mbox{for $n\geq 1$.}
\end{equation}
These equations gain a simpler form after introducing
the formal power series
$$
\beta_n(x,y)=\alpha_n(x,y)\cdot e^{-y}.
$$
For these, equations~(\ref{eq:ainit}) and (\ref{eq:arec}) may be
rewritten as
\begin{equation}
  \label{eq:binit}
  \beta_1(x,y)=xe^{-y} \quad\mbox{and}
\end{equation}
\begin{equation}
  \label{eq:brec}
  \beta_{n+1}(x,y)
  =xy \beta_n(x+1,y)+x\beta_n(x,y)
  +xy \frac{\partial}{\partial y}\beta_n(x,y)
  \quad\mbox{for $n\geq 1$.}
\end{equation}
Let us define the polynomial $\beta_{n,k}(x)$ as the coefficient of
$y^k$ in $\beta_n(x,y)$. Equations~(\ref{eq:binit}) and (\ref{eq:brec}) may be
transformed into  
\begin{equation}
  \label{eq:bkinit}
  \beta_{1,k}(x)=x\cdot\frac{(-1)^k}{k!} \quad\mbox{for $k\geq 0$ and}
\end{equation}
\begin{equation}
  \label{eq:bkrec}
  \beta_{n+1,k}(x)
  =x(\beta_{n,k-1}(x+1)+(k+1)\beta_{n,k}(x))
  \quad\mbox{for $n\geq 1$ and $k\geq 0$.}
\end{equation}
Note that (\ref{eq:bkrec}) also holds for $k=0$, once we set
$\beta_{n,-1}(x)=0$ for all $n$. Let us set finally
$\gamma_{n,k}(x)=\beta_{n,k}(x-k)$.  
Equations~(\ref{eq:binit}) and (\ref{eq:brec}) may be
transformed into  
\begin{equation}
  \label{eq:ckinit}
  \gamma_{1,k}(x,y)=(x-k)\cdot\frac{(-1)^k}{k!} \quad\mbox{for $k\geq 0$ and}
\end{equation}
\begin{equation}
  \label{eq:ckrec}
  \gamma_{n+1,k}(x)
  =(x-k)(\gamma_{n,k-1}(x)+(k+1)\gamma_{n,k}(x))
  \quad\mbox{for $n\geq 1$ and $k\geq 0$.}
\end{equation}
Again we set $\gamma_{n,-1}(x)=0$ for all $n$.
This is an array of polynomials that is easy to compute after
introducing
$$
\gamma_k(x,t)=\sum_{n=1}^{\infty} \gamma_{n,k}(x) t^n.   
$$
For $k=0$, Equation~(\ref{eq:ckinit}) and repeated use of
Equation~(\ref{eq:ckrec}) yields $\gamma_{n,0}=x^n$ for $n\geq 1$. Hence
we have 
\begin{equation}
  \label{eq:cinit}
\gamma_0(x,t)=\frac{xt}{1-xt}.
\end{equation}
For $k\geq 1$, Equation~(\ref{eq:ckrec}) implies the recurrence
\begin{equation}
  \label{eq:crec}
\gamma_{k}(x,t)=\frac{(x-k)t}{1-(x-k)(k+1)t}\cdot \left(\frac{(-1)^k}{k!}+
\gamma_{k-1}(x,t)\right). 
\end{equation}  
Using Equations~(\ref{eq:cinit}) and (\ref{eq:crec}), an easy induction
on $k$ implies
\begin{equation}
  \label{eq:cfun}
\gamma_k(x,t)=\sum_{i=0}^k \frac{(-1)^{k-i}}{(k-i)!}
\prod_{\ell=0}^i \frac{(x-k+\ell) t}{
  1-(x-k+\ell)(k+1-\ell)t}.  
\end{equation}  
Next we introduce
$$
\widetilde{\beta}_k(x,t)=\sum_{n=0}^{\infty} \beta_{n,k}(x) t^n.  
$$
The definition of $\gamma_{n,k}(x)$ implies
$\beta_{n,k}(x)=\gamma_{n,k}(x+k)$ and
$\widetilde{\beta}_k(x,t)=\gamma_k(x+k,t)$. Hence
Equation~(\ref{eq:cfun}) may be rewritten as   
\begin{equation}
  \label{eq:bfun}
\widetilde{\beta}_k(x,t)=\sum_{i=0}^k \frac{(-1)^{k-i}}{(k-i)!}
\prod_{\ell=0}^i \frac{(x+\ell) t}{
  1-(x+\ell)(k+1-\ell)t}.  
\end{equation}  
Finally, as an immediate consequence of the definitions we have
$$
\alpha(x,y,t)=\sum_{k=0}^{\infty} \widetilde{\beta}_k(x,t)\cdot y^k\cdot e^y.   
$$
Combining the last equation with Equation~(\ref{eq:bfun}) we obtain the
formula
$$
\alpha(x,y,t)=\sum_{j=0}^\infty y^j \sum_{k=0}^j \frac{1}{(j-k)!} \sum_{i=0}^k \frac{(-1)^{k-i}}{(k-i)!}
\prod_{\ell=0}^i \frac{(x+\ell) t}{
  1-(x+\ell)(k+1-\ell)t}   
$$
Taking into account $\prod_{\ell=0}^i (k+1-\ell)=(k+1)!/(k-i)!$
we obtain the following result.
\begin{theorem}
 \label{thm:agenfun}  
The generating function $\alpha(x,y,t)=\sum_{n,i,j} A(n,i,j) x^i y^j
t^n/j! $ is given by
$$
\alpha(x,y,t)=\sum_{j=0}^\infty \frac{y^j}{j!}
\sum_{k=0}^j \binom{j}{k} \frac{1}{k+1} \sum_{i=0}^k (-1)^{k-i}
\prod_{\ell=0}^i \frac{(x+\ell)(k+1-\ell)t}{
  1-(x+\ell)(k+1-\ell)t}.
$$
\end{theorem}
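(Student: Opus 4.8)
The plan is to follow the chain of reductions already set up for the coefficients $\alpha_n(x,y)$ of $t^n$ in $\alpha(x,y,t)$, solve the resulting recurrence in closed form, and then repackage the answer into the stated shape. The starting point is the recurrence \eqref{eq:arec}, which is just Corollary~\ref{cor:altac} transcribed for the generating function; the obstruction there is that it mixes the shift $x\mapsto x+1$, multiplication by $x$, and the operator $y\,\partial/\partial y$. The first task is therefore to find substitutions that decouple these operations.

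The crucial devices are the two changes of variable introduced before the statement. First I would pass to $\beta_n(x,y)=\alpha_n(x,y)e^{-y}$: writing $\partial_y\alpha_n=(\partial_y\beta_n+\beta_n)e^y$ shows that the combination $x\alpha_n+xy\,\partial_y\alpha_n$ contributes an extra $xy\beta_n$ that exactly cancels the $-xy\beta_n(x,y)$ coming from $xy(\alpha_n(x+1,y)-\alpha_n(x,y))$, leaving the cleaner recurrence \eqref{eq:brec}. Extracting the coefficient of $y^k$ gives \eqref{eq:bkrec}. At this point the two terms $\beta_{n,k-1}(x+1)$ and $\beta_{n,k}(x)$ still have mismatched arguments, so the second device is the shift $\gamma_{n,k}(x)=\beta_{n,k}(x-k)$, which aligns them and produces \eqref{eq:ckrec}, where $\gamma_{n,k-1}(x)$ and $\gamma_{n,k}(x)$ share the same argument. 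This alignment is precisely what makes the $t$-generating function $\gamma_k(x,t)=\sum_n\gamma_{n,k}(x)t^n$ satisfy the first-order-in-$k$ recurrence \eqref{eq:crec}, with base case $\gamma_0(x,t)=xt/(1-xt)$ from \eqref{eq:cinit}; a straightforward induction on $k$ then yields the closed form \eqref{eq:cfun}.

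It remains to reassemble $\alpha$ and simplify. Undoing the shift via $\widetilde{\beta}_k(x,t)=\gamma_k(x+k,t)$ turns \eqref{eq:cfun} into \eqref{eq:bfun}, and since $\alpha_n=\beta_n e^y$ I would write
\[
\alpha(x,y,t)=e^y\sum_{k\ge0}y^k\,\widetilde{\beta}_k(x,t)
=\sum_{j\ge0}y^j\sum_{k=0}^{j}\frac{1}{(j-k)!}\,\widetilde{\beta}_k(x,t),
\]
collecting the coefficient of $y^j$ with $j=m+k$ after expanding $e^y=\sum_m y^m/m!$. Substituting \eqref{eq:bfun} gives the intermediate formula displayed just before the theorem. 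The stated form then follows by pure bookkeeping: one uses $\prod_{\ell=0}^{i}(k+1-\ell)=(k+1)!/(k-i)!$ to move the factors $(k+1-\ell)$ into the numerators of the product, and rewrites $\frac{1}{(j-k)!}=\frac{1}{j!}\binom{j}{k}k!$ together with $\frac{k!}{(k+1)!}=\frac{1}{k+1}$, so that the constants regroup exactly into $\frac{y^j}{j!}\binom{j}{k}\frac{1}{k+1}(-1)^{k-i}$.

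Since the substitutions and the closed form \eqref{eq:bfun} are already available, the only genuinely new work in proving the stated identity is this final regrouping, where the sole pitfall is careful tracking of the factorial and binomial factors through the product identity. The conceptual heart of the whole computation, however, lies earlier, in the two substitutions $\beta_n=\alpha_n e^{-y}$ and $\gamma_{n,k}(x)=\beta_{n,k}(x-k)$, which convert an apparently intractable differential recurrence into a telescoping one; once these are in place I expect no essential difficulty.
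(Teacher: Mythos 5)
Your proposal is correct and follows essentially the same route as the paper: the substitution $\beta_n=\alpha_n e^{-y}$ to cancel the difference term, the coefficient extraction in $y$, the shift $\gamma_{n,k}(x)=\beta_{n,k}(x-k)$ to align arguments, the induction on $k$ for the closed form of $\gamma_k(x,t)$, and the final regrouping via $\prod_{\ell=0}^{i}(k+1-\ell)=(k+1)!/(k-i)!$ and $\frac{1}{(j-k)!}=\frac{1}{j!}\binom{j}{k}\frac{k!}{1}$. All the individual verifications (the cancellation of $\pm xy\beta_n$, the coefficient bookkeeping, and the binomial identities) check out, so there is nothing to correct.
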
  
By Theorem~\ref{thm:all-alta}, the generating function of the median Genocchi
numbers $H_{2n-1}$ is obtained by substituting $x=1$ and replacing each
$y^j$ with $j!$ in $\alpha(x,y,t)$. 
\begin{corollary}
 The median Genocchi numbers satisfy 
  $$
 \sum_{n=1}^\infty H_{2n-1} t^n=
 \sum_{j=0}^\infty
 \sum_{k=0}^j \binom{j}{k} \frac{1}{k+1} \sum_{i=0}^k (-1)^{k-i}
\prod_{\ell=0}^i \frac{(1+\ell)(k+1-\ell)t}{
  1-(1+\ell)(k+1-\ell)t}.
  $$
\end{corollary}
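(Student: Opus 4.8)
The plan is to obtain the corollary by applying to the identity of Theorem~\ref{thm:agenfun} the two substitutions announced in the paragraph preceding its statement, namely $x=1$ together with the replacement of each $y^j$ by $j!$. First I would record what these two operations do to $\alpha$ itself. Writing $\alpha(x,y,t)=\sum_n \alpha_n(x,y)\,t^n$, the coefficient $\alpha_n(x,y)=\sum_{i,j}\frac{A(n,i,j)}{j!}x^iy^j$ is a genuine polynomial in $x$ and $y$, since $A(n,i,j)=0$ whenever $i+j>n$ by Proposition~\ref{prop:i+j>n}; in particular $j$ ranges only over $0\le j\le n-1$. Let $L$ denote the linear operator sending $y^j$ to $j!$. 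Setting $x=1$ removes the weighting by $i$, and applying $L$ cancels the denominators $j!$, so that
$$
L\bigl(\alpha_n(1,y)\bigr)=\sum_{i,j}\frac{A(n,i,j)}{j!}\cdot j!=\sum_{i,j}A(n,i,j).
$$
By Theorem~\ref{thm:all-alta} this sum is exactly the median Genocchi number $H_{2n-1}$, so the combined operation turns $\alpha(x,y,t)$ into $\sum_{n\ge1}H_{2n-1}t^n$.

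Next I would apply the very same operations to the right-hand side of Theorem~\ref{thm:agenfun}. Substituting $x=1$ replaces every factor $(x+\ell)$ by $(1+\ell)$, and since the entire $y$-dependence of that side sits in the prefactor $y^j/j!$, applying $L$ turns $y^j/j!$ into $j!/j!=1$. This is precisely how the $\frac{y^j}{j!}$ in Theorem~\ref{thm:agenfun} disappears and leaves the displayed formula of the corollary. Matching this transformed right-hand side with the transformed left-hand side computed above yields the asserted generating function for $H_{2n-1}$.

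The one point that needs care, and which I expect to be the only real obstacle, is the legitimacy of applying $L$ term by term to the outer sum over $j$, i.e.\ of commuting $L$ with an infinite summation. To justify this I would observe that, after setting $x=1$, the coefficient of $y^j/j!$ in Theorem~\ref{thm:agenfun} is a power series $B_j(t)$ in $t$ whose $t^n$-coefficient equals $\sum_i A(n,i,j)$; by Proposition~\ref{prop:i+j>n} together with $i\ge1$ this coefficient vanishes whenever $n\le j$, so $B_j(t)=O(t^{j+1})$. Consequently, for each fixed $n$ only the finitely many indices $j<n$ contribute to the coefficient of $t^n$ in the outer sum, the series is well defined as a formal power series in $t$, and the termwise action of $L$ is valid. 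This same vanishing is what guarantees that the summation order displayed in the corollary, with $j$ outermost, is the meaningful one, whereas interchanging it with the sum over the inner index would produce divergent coefficients.
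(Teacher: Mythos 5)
Your proposal is correct and follows exactly the paper's route: the paper likewise derives the corollary from Theorem~\ref{thm:agenfun} by setting $x=1$ and replacing each $y^j$ by $j!$, with Theorem~\ref{thm:all-alta} identifying $\sum_{i,j}A(n,i,j)$ as $H_{2n-1}$. Your additional verification that $B_j(t)=O(t^{j+1})$ (via Proposition~\ref{prop:i+j>n} and $i\geq 1$), so that the substitution commutes with the outer sum and the right-hand side is a well-defined formal power series, is a point the paper leaves implicit and is a welcome tightening rather than a deviation.
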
  
By Corollary~\ref{cor:asc}, the generating function of the Genocchi
numbers of the first kind is obtained by replacing each $y^j$ by $j!$
and then taking the coefficient of $x$ in in
$\alpha(x,y,t)$. To use Theorem~\ref{thm:agenfun}, observe that
all powers of $x$ occur in the products of the form
$$
\prod_{\ell=0}^i \frac{(x+\ell)(k+1-\ell)t}{
  1-(x+\ell)(k+1-\ell)t}.
$$
Here, for $\ell=0$ , the factor
$$
\frac{x(k+1)t}{1-x(k+1)t}=x(k+1)t+x^2(k+1)^2t^2\cdots 
$$
has no constant term, and the coefficient of $x$ is $(k+1)t$. We can
take out this factor, simplify by (k+1), and only the constant terms of
the remaining factors contribute to the coefficient of
$x$. Theorem~\ref{thm:agenfun} thus has the following consequence.
\begin{corollary}
The Genocchi numbers of the first kind
satisfy
$$
\sum_{n=1}^{\infty} |G_{2n}| t^n=
t\cdot \sum_{j=0}^\infty 
\sum_{k=0}^j \binom{j}{k} \sum_{i=0}^k (-1)^{k-i}
\prod_{\ell=1}^i \frac{\ell(k+1-\ell)t}{
  1-\ell(k+1-\ell)t}.
$$
In this formula, when $i=0$, we define the empty product to be equal to $1$. 
\end{corollary}  

\section{Concluding remarks}
\label{sec:concl}

Dumont's first permutation models for the Genocchi numbers were created
by finding a class of excedant functions first~\cite[Corollaire du
  Th\'eor\`eme 3]{Dumont}, and then establishing a bijection between
excedant functions and permutations~\cite[Section 5]{Dumont}. This
bijection is very different from, although similar in spirit to our 
Theorem~\ref{thm:descbij}. Using the bijection presented in
Theorem~\ref{thm:descbij}, new classes of permutations counted by
Genocchi numbers of the first kind may be introduced, however these
classes will be very similar if not identical to the examples obtained
by Dumont, after combining his bijection with Foata's fundamental
transformation~\cite{Foata} which transforms counting excedances into counting
descents. Dumont's bijection between permutations and excedant
functions makes identifying excedances easy, whereas our bijection is
poised on identifying descents. More interesting results could be hoped
for by finding new permutation models for median Genocchi numbers using
Remark~\ref{rem:Dumont} and Corollary~\ref{cor:Dumont2}. The curiosity of all
results presented in this paper is that objects counted by Genocchi
numbers of the first kind are presented as subsets of objects counted by
median Genocchi numbers: it is usually the other way around in the
literature. 

This paper arose in a search for generalizations of semiacyclic
tournaments that appear in the work of Postnikov and
Stanley~\cite{Postnikov-Stanley}. In particular, we have found a
hyperplane arrangement, whose regions are counted by the median Genocchi
numbers, known to be multiples of powers of $2$. Semiacyclic tournaments
count regions in the Linial arrangement, which is a section of the
arrangement we presented in this paper. The number of semiacyclic
tournaments on $n$ vertices is known to be
 $$
2^{-n}\sum_{k=0}^n \binom{n}{k} (k+1)^{n-1}
$$
see~\cite[Theorem 8.1]{Postnikov-Stanley}. It is hard to miss in the
above formula that the sum after 
the factor $2^{-n}$ is obviously an integer, but not obviously a
multiple of $2^n$. No combinatorial proof of this divisibility is
known, perhaps the $q$-counting of the regions of the Linial arrangement
by Athanasiadis~\cite{Athanasiadis-extLin} comes closest. Perhaps the
$q$-counting of the regions 
of our homogenized Linial arrangement, combined with a better
understanding how the Linial arrangement appears as a section of our
arrangement could help find some additional explanations how divisibility by
a power of $2$ appears in both settings.

\section*{Acknowledgments}
This work was partially supported by grants from the Simons Foundation
(\#245153 and \#514648 to G\'abor Hetyei). The author thanks Ange
Bigeni and two anonymous referees for valuable advice, many great
suggestions and important corrections.

\end{document}